 \newtheorem{thm}{Theorem}[section]
 \newtheorem{prop}[thm]{Proposition}
 \theoremstyle{definition}
 \newtheorem{defn}[thm]{Definition}
 \theoremstyle{remark}
 \newtheorem{rem}[thm]{Remark}
 \newtheorem{ass}[thm]{Assumption}
 \numberwithin{equation}{section}
\newcommand\de{\mathrm{d}}
\newcommand\real{\mathbb{R}}
\newcommand\DD{\mathbb{D}}
\newcommand\LL{\mathcal{L}}
\newcommand\eps{\epsilon}
\newcommand\ind{\mathbbm{1}}
\newcommand\DDD{M^2}
\newcommand\Sp{\mathcal{S}^p}
\newcommand\prob{\mathbb{P}}
\newcommand\media{\mathbb{E}}
\newcommand\KK{\mathbb{K}}
\newcommand\SDDEs{stochastic delay differential equations }
\newcommand\fb{forward-backward }
\newcommand\BDG{Burkholder-Davis-Gundy } 
\newcommand\cadlag{c\`{a}dl\`{a}g } 
\newcommand\comp{\tilde{N}}
\newcommand\sko{\hat{\de}}
\newcommand\norma{\|}
\newcommand{\E}[1]{\mathbb{E}\left  [  #1 \right ] }
\newcommand{\R}{\mathbb{R}}
\newcommand{\RR}{\mathbb{R}}
\newcommand{\Ind}[1]{\mathbbm{1}_{\left [#1\right ]}}
\newcommand{\IndN}[1]{\mathbbm{1}_{#1}}
\newcommand{\XTE}{X^{\tau,\eta,x}}
\newcommand{\XEX}{X^{\eta,x}}
\newcommand{\EXU}{\eta_1,x_1}
\newcommand{\EXD}{\eta_2,x_2}
\newcommand{\EXZ}{\eta,x}
\newcommand{\EXUN}{\eta_1,x_1}
\newcommand{\EXDN}{\eta_2,x_2}
\newcommand{\DM}{\mathbb{D}^{1,2}}
\newcommand{\NT}[1]{\tilde{N}(#1)}
\begin{document}

%

\title{A nonlinear Kolmogorov equation for stochastic functional delay differential equations with jumps}
\author{Francesco Cordoni$^{a}$, Luca Di Persio$^{b}$,
\and Immacolata Oliva$^{c}$\bigskip\\{\small $^{a}$ Department of Mathematics, University of Trento,}\\{\small Via Sommarive, 14, Trento, 38123, Italy}\\{\small $^{b}$ Department of Computer Science, University of Verona,}\\{\small Strada le Grazie, 15, Verona, 37134, Italy}\\{\small $^{c}$ Dept. of Economics, University of Verona,}\\{\small Vicolo Campofiore, 2, Verona, 37129, Italy}}
\date{}
\maketitle


\begin{abstract}

We consider a stochastic functional delay differential equation, namely an equation whose evolution depends on its past history as well as on its present state, driven by a pure diffusive component plus a pure jump Poisson compensated measure. We lift the problem in  the infinite dimensional space of square integrable Lebesgue functions in order to show that its solution is an $L^2-$valued Markov process whose uniqueness can be shown under standard assumptions of locally Lipschitzianity and linear growth for the coefficients. Coupling the aforementioned equation with a standard backward differential equation, and deriving some ad hoc results concerning the Malliavin derivative for systems with memory, we are able to derive a non-linear 
Feynman-Kac representation theorem under mild assumptions of differentiability.
\end{abstract}

\renewcommand{\thefootnote}{\fnsymbol{footnote}}
\footnotetext{{\scriptsize E-mail addresses: francesco.cordoni@unitn.it
(Francesco Cordoni), luca.dipersio@univr.it (Luca Di Persio),
(Immacolata Oliva) immacolata.oliva@univr.it}}

\textbf{AMS Classification subjects:} 34K50; 60H07; 60H10; 60H30; 60G51 \medskip

\textbf{Keywords or phrases: }\SDDEs, quadratic variation, L\'evy processes, Feynman--Kac formula, mild solution.

\section{Introduction}\label{SEC:Intro}

During recent years, an increasing attention has been paid to stochastic equations whose evolution depends 
not only on the present state, but also on the past history. In particular, it has been shown that memory 
effects cannot be neglected when dealing with many natural phenomena. As examples, let us mention 
the coupled \emph{atmosphere-ocean models}, see, e.g., \cite{nino}, and their applications in describing 
climate changes in the environmental sciences setting, or the effect of time delay considering population dynamics, when suitable growth models are considered, see, e.g., \cite{growth}. Nevertheless, assumptions that will be made throughout the work are mainly taken into account having in mind concrete financial applications. For instance, in \cite{Pla,Swi3} the authors pointed out how delay arises in commodity markets and energy markets, when it is necessary to take into account the impact of production and transportation, whereas in \cite{Arr,Cha} the authors provide applications to option pricing in markets with memory. Similarly, delay naturally arises when dealing with financial instruments as, e.g., \textit{Asian options} or \textit{lookback options}, as studied in, e.g. \cite{CDP} and references therein. 

For the mathematical foundations of the theory of stochastic functional delay differential equations (SFDDEs) we refer to \cite{Moh}, as well as to \cite{Moh1} to many motivating examples concerning the treatment of equations with delay. In particular the monograph \cite{Moh} represents an early and deep treatment of SFDDE's, where several results concerning existence and uniqueness of solutions to SFDDE's as well as regularity results are derived. The theory of delay equations has seen a renewed attention recently, in particular in \cite{CF1,CF2} an ad hoc stochastic calculus, known as \textit{functional It\^{o}'s calculus}, has been derived, based on a suitable It\^{o}'s formula for delay equations. Also, in past few years several different works have appeared deriving fundamental results on delay equations based on semigroup theory and infinite dimensional analysis, see, e.g. \cite{Fla,Fla2}, or based on the calculus via regularization, see, e.g. \cite{Cos,FMT}. Eventually, in \cite{Fla,Cos}, it has been shown that SFDDE's, \textit{path-dependent calculus} and delay equations via semigroup theory, are in fact closely related.

Having in mind possible financial applications, the aim of the present work is to extend some results concerning the \textit{non-linear Feynman-Kac formula} for a forward-backward system with delay, where the driving noise is a non Gaussian L\'{e}vy process, using the theory of SFDDE's first introduced in \cite{Moh}. 
It is worth to mention that, particularly during last decades, asset price dynamics and, more generally,  financial instruments processes, have been widely characterized by trajectories showing sudden changes and ample jumps. It follows that the  classical Black and Scholes picture has to be refined by allowing to consider random components constituted by both  diffusive and jump components.

We thus consider  the following $\RR-$valued SFDDE with jumps
\begin{equation}\label{initialproblem}
\begin{split}
d X(t)  = & \mu(t,X(t+\cdot),X(t)) dt + \sigma(t,X(t+\cdot),X(t)) dW(t) + \\
+&\int_{\RR_0} \gamma(t,X(t+\cdot),X(t),z) \tilde{N}(dt,dz)\, ,
\end{split}
\end{equation}
where $W(t)$ is a standard Brownian motion, $\tilde{N}(dt,dz)$ is a compensated Poisson random measure with associated L\'{e}vy measure $\nu$. Also the notation $X(t+ \cdot)$ means that the {\it coefficients} $\mu, \sigma$ and $\gamma$, at time $t$, depend not only on the 
present state of the process $X$ but also on its past values.
Exploiting the concept of \textit{segment} of a process $X$, see, e.g., 
\cite{Moh,Moh1}, we will lift the finite dimensional $\real$-valued process solution to \eqref{initialproblem}  
to an infinite dimensional stochastic process with values in a suitable path-space. More precisely, in what follows we will denote by $r >0$ the maximum delay taken into account and 
$T< \infty$ a fixed finite time horizon. Thus, for an $\RR-$valued stochastic process $X,$ we indicate 
with $X(t)$ the value in $\RR$ at time $t \in [0,T]$ and with $X_t$ the corresponding segment, i.e. 
the trajectory in the  time interval $[t-r,t]$, that is $X_t(\cdot): [-r,0] \to \R$ 
is such that $X_t(\theta):= X(t+\theta)$ for all $\theta \in [-r,0]$. 

Then equation \eqref{initialproblem} can be rewritten as s
\begin{align}\label{delay_eq}
\begin{cases}
\de X(t) = &\mu(t, X_t,X(t)) \de t + \sigma(t, X_t,X(t)) \de W(t) +\\
&+ \int_{\real_0} \gamma(t,X_t,X(t),z) \comp(\de t, \de z) \\
  (X_0,X(0)) & = (\eta(\theta),x)
\end{cases}		
	\;,
\end{align}
for all $t \,\in\, [0,T],$ $\theta \, \in \, [-r,0]$, $x \in \RR$ and $\eta$ a suitable $\RR-$valued function on $[-r,0]$.
\begin{rem}
In what follows we will only consider the $1-$dimensional case, the case of a $\RR^d-$valued stochastic process, perturbed by a general $\RR^m-$ dimensional Wiener process and a $\RR^n-$dimensional Poisson random measure, with $d>1$, $m>1$ and $n>1$, can be easily obtained from the present one.
\end{rem}
In order to take into account the delay component, we study the equation \eqref{delay_eq} in the \textit{Delfour-Mitter space} defined as follows 
$\DDD := L^2\left ([-r,0];\RR\right ) \times \RR$, endowed with the scalar product
\[
\left \langle (X_t,X(t)),(Y_t,Y(t)) \right \rangle_{M_2} = \left \langle X_t,Y_t \right \rangle_{L^2} 
+ X(t) \cdot Y(t)\, ,
\]
and norm
\begin{align}\label{norm}
\|(X_t,X(t))\|_{M_2}^2 =\|X_t\|_{L^2}^2 + |X(t)|^2\, , \quad (X_t,,X(t)) \in M_2\, ,
\end{align}
where $\cdot$, resp. $|\cdot|$, stands for the scalar product in $\RR$, resp. the absolute value, and $\langle \cdot, \cdot \rangle_{L^2}$, resp. $\|\cdot\|_{L^2}$, is the scalar product, resp. norm, in $L^2([-r,0];\RR)=:L^2$. Note that the space $M^2$ is a separable Hilbert space, see, e.g., \cite{bcddr}. The \textit{Delfour-Mitter space} can be generalized to be a separable Banach space if we consider $p \in (1,\infty),$ equipped with the appropriate norm. In this work we will consider the case $p=2$.

Alternatively, we could have considered the space of \emph{\cadlag functions}, i.e. right--continuous functions with finite left limit, on the interval $[-r,0]$,  $\mathbf{D}:= D\left ([-r,0];\RR\right )$ called \textit{Skorokhod space}; in particular $\mathbf{D}$ is a non separable Banach space if endowed with the sup norm $\| \cdot \|_{\mathbf{D}} = \sup_{t \in [-r,0]} |\cdot |$. We also have that $\mathbf{D} \subset M^2$ with the injection being continuous, see, e.g., \cite{bcddr}. Nevertheless, choosing $M^2$ as state space we cannot deal with the case of discrete delays, see, e.g., \cite[pag. 3]{bcddr}, or \cite{Moh}.
 
The choice of considering the  Hilbert space $M^2$ instead of the \textit{Skorokhod space} $\mathbf{D}$ has two main motivations. First, the separability of the Hilbert space $M^2$ allows us to prove a fundamental property for the SFDDE under investigation, that is we will show that although exhibiting delay, the SFDDE \eqref{delay_eq} is a $M^2-$Markov process. The same, primary due to the fact that the \textit{Skorokhod space} $\mathbf{D}$ is not a separable Banach space, does not hold if one considers $\mathbf{D}$ as state space. One can nevertheless avoid this problem considering weaker topologies on $\mathbf{D}$, such as the so-called \textit{Skorokhod topology}, under which $\mathbf{D}$ can be shown to be separable, see, e.g \cite{Rie}. For the sake of simplicity we will address here the simpler case of an $M^2-$valued process, leaving the more technical case of $\mathbf{D}-$process to future investigations.

Second reason we are choosing here the Hilbert space $M^2$ is the extensive use we will do of \textit{Malliavin calculus}. In fact Malliavin calculus provides a powerful tool to study general regularity properties of a process or, as in the present case, to obtain representation theorem under mild regularity assumptions for the process. Nevertheless its generalization to the infinite dimensional setting, mostly when the driving noise is a general L\'{e}vy process, is rather technical and the theory, even if promising results have been obtained, see \cite{App2} and references therein, is still not completely developed. For these reasons, in the present work, we will employ an approach similar to the one used in \cite{DI} for backward stochastic differential equations with time-delayed generator and in \cite{FMT} for SFDDE with a Brownian noise. We will in fact exploit the fact that the original equation \eqref{delay_eq} has value in a finite dimensional space, so that one can use standard results in Malliavin calculus. This will imply that, exactly as in \cite{FMT}, we will not use a purely infinite dimensional formulation for our problem, such as for instance the one first formulated in \cite{ACM} and subsequently used in \cite{FT1}. In fact the $M^2-$setting will be mainly used to prove existence and uniqueness of a solution and most important, as mentioned above, we are able to prove that the SFDDE \eqref{delay_eq} is a $M^2-$Markov process.

We have already mentioned that, despite the fact that the process \eqref{delay_eq} exhibits memory effects, lifting the problem to consider a $\DDD-$value solution leads to obtain a solution which is a \emph{Markov process.} 
Taking in mind latter result and in order to derive the  \textit{Kolomogorov equation} associated to equation \eqref{delay_eq}, we will consider, following \cite{FMT,FT,FT1}, a classical $\RR-$valued backward stochastic differential equation (BSDE), coupled with the forward equation equation \eqref{delay_eq}, which evolves according to 

\begin{align}\label{Bdelay_tx}
\begin{cases}
		\de Y(t) &= \psi\left (t, X_{t},X(t), Y(t), Z(t),\int_{\RR_0} U(t,z) \delta(z) \nu(dz) \right ) \de t \\ 
		& + Z(t) \de W(t) + \int_{\real_0}U(t,z) \comp(\de t, \de z) \\
		Y(T) & = \phi(X_T,X(T)) 
\end{cases} \;,
\end{align}

where $\psi$ and $\phi$ are  given suitable functions to be specified later on. We recall that a solution to equation \eqref{Bdelay_tx} is a triplet $\left( Y, Z,U\right ),$ where $Y$ is the \emph{state process}, while $Z$ and $U$ are the \emph{control processes}.

It is well known that, when the delay is not involved, there exists a Feynman-Kac representation 
theorem that connects the solution of the coupled forward-backward system \eqref{delay_eq} and 
\eqref{Bdelay_tx}, to a deterministic semi-linear partial integro-differential equation, see, e.g., 
\cite[Chapter. 4]{delong} or \cite{Bar} for further details.
When the delay is taken into consideration, previous result has been recently proved in the Brownian case in \cite{FMT,FT1}. 
In the present paper we extend latter result taking into consideration a non Gaussian L\'{e}vy noise. 
In particular, exploiting notations already introduced, we will consider the following coupled forward-backward stochastic differential equation (FBSDE) with delay, for $t \in [\tau,T] \subset [0,T],$

\begin{align}\label{syst}
\begin{cases}
    \de X^{\tau,\eta,x}(t) & = \mu(t, X^{\tau,\eta,x}_t,X^{\tau,\eta,x}(t)) \de t + \sigma(t, X^{\tau,\eta,x}_t,X^{\tau,\eta,x}(t)) \de W(t) \\
		 & \quad  + \int_{\real_0} \gamma(t,X_t^{\tau,\eta,x},X^{\tau,\eta,x}(t),z) \comp(\de t, \de z) \\
		\smallskip
  	 (X_\tau^{\tau,\eta,x},  X^{\tau,\eta,x}(\tau))  &= (\eta,x )\in \DDD \\
		\de Y^{\tau,\eta,x}(t) & = \psi\left (t, X^{\tau,\eta,x}_{t},X^{\tau,\eta,x}(t), Y^{\tau,\eta,x}(t) 
		,Z^{\tau,\eta,x}(t), \tilde{U}^{\tau,\eta,x}(t)\right ) \de t\\
		& \quad + Z^{\tau,\eta,x}(t) \de W(t) +  \int_{\real_0}U^{\tau,\eta,x}(t,z) \comp(\de t, \de z) \\
		Y^{\tau,\eta,x}(T)  &= \phi(X_T^{\tau,\eta,x},X^{\tau,\eta,x}(T)) 
\end{cases}
\, ,
\end{align}

where we have denoted for short by
\[
\tilde{U}^{\tau,\eta,x}(t) := \int_{\RR_0} U^{\tau,\eta,x}(t,z)\delta(z) \nu(dz)\, .
\]
Moreover we have denoted by $X^{\tau,\eta,x}$  the value of the process with starting time $\tau \in [0,T]$ and initial value $(\eta,x) \in \DDD$. In what follows we will often omit the dependence on the initial value point $(\eta,x)$ and we assume that  the process starts at time $\tau=0$, i.e. $X_t^{0,\eta,x} =: X_t$. Also, in order to simplify notation, most of the results will be proved for $\tau=0$, the extension 
to the general case of $\tau \not =0$  being straightforward.

We are going to connect the solution to the FBSDE \eqref{syst} to the solution of the following partial integro-differential Hilbert--space valued equation

\begin{equation}\label{EQN:MildKolmo}
\begin{cases}
\frac{\partial}{\partial t}u(t,\eta,x) + \mathcal{L}_t u(t,\eta,x) 
= \psi\left (t,\eta,x, u(t,\eta,x), \partial_x u(t,\eta,x) \sigma(t,\eta,x), \mathcal{J} u(t,\eta,x)\right  )\, \\
u(T,\eta,x) = \phi(\eta,x)\, ,\quad t \in [0,T]\,\,\,,\,\,\, (\eta,x)\in \DDD\, .
\end{cases}
\end{equation}

where $\mathcal{L}_t$ is the \emph{infinitesimal generator} of the forward $\DDD-$valued process in equation \eqref{FBsyst_tx}, $\partial_x$ is the derivative with respect to the present state $X(t)$ and $ \mathcal{J} $ is the operator
\[
\mathcal{J} u(t,\eta,x) := \int_{\RR_0} [u(t, \eta,x+  \gamma(t,\eta,x,z))-u(t,\eta,x)] \delta(z)\nu (dz)\, .
\]

In particular, we will consider a mild notion of solution to equation \eqref{EQN:MildKolmo}, so that we say that a function $u : [0,T] \times \DDD \to \RR$ is a \textit{mild solution} to equation \eqref{EQN:MildKolmo} 
if there exist $C >0$ and $m \geq 0$, such that, for any $t \in [0,T]$ and any $(\eta_1,x_1)$, $(\eta_2,x_2) \in \DDD$, 
$u$ satisfies

\begin{equation}
\begin{split}
&|u(t,\eta_1,x_1) - u(t,\eta_2,x_2)|\leq C|(\eta_1,x_1)-(\eta_2,x_2)|_2(1+ |(\eta_1,x_1)|_2 + |(\eta_2,x_2)|_2 )^m\, ,\\
&|u(t,0,0)| \leq C \, ,
\end{split}
\end{equation}

and the following equality holds true
\begin{equation}\label{def_mild2}
u(t,\eta,x) = P_{t,T} \phi (\eta,x) + \int_t^T P_{t,s}[\psi(\cdot, u(s,\cdot), \partial_x u(s,\cdot) 
\sigma(s,\cdot), \mathcal{J} u(s, \cdot )](\eta,x) \de s \;,
\end{equation}
for all $t\, \in \, [0,T],$ and $(\eta,x) \, \in \, \DDD$, $P_{t,s}$ being the \emph{Markov semigroup} 
related to the equation \eqref{delay_eq}. In particular we would like to stress that we require the solution $u$ to equation \eqref{EQN:KFTTh} to be locally Lipschitz continuous with respect to the second variable with at most polynomial growth, so that the derivative appearing in the right--hand--side of equation \eqref{def_mild2} is to be defined in a mild sense, to better specified later on.

We thus define  
\begin{align*}
\begin{cases}
 Y^{\tau,\eta,x}(t)&:=u(t,X^{\tau,\eta,x}_t,X^{\tau,\eta,x}(t))\\
Z^{\tau,\eta,x}(t)&:=\partial_x u(t,X^{\tau,\eta,x}_t,X^{\tau,\eta,x}(t)) \,\,\sigma(t,X^{\tau,\eta,x}_t,X^{\tau,\eta,x}(t)) \\
 U^{\tau,\eta,x}(t,z)&:=u\left (t, X^{\tau,\eta,x}_t,X^{\tau,\eta,x}(t)+\gamma(t,X^{\tau,\eta,x}_t,X^{\tau,\eta,x}(t),z) \right ) \\
 & - u\left (t,X^{\tau,\eta,x}_t,X^{\tau,\eta,x}(t)\right )
\end{cases}
\end{align*}
then the triplet $\left (Y^{\tau,\eta,x},Z^{\tau,\eta,x},U^{\tau,\eta,x}\right )$ is the unique 
solution to the backward equation \eqref{Bdelay_tx}, where $\partial_x$ is the derivative with respect to
the $\RR-$valued present state $X(s)$ of $(X_s,X(s))$, $u$ being  the \textit{mild solution} to the \textit{Kolmogorov equation} 

\[
\begin{cases}
\frac{\partial}{\partial t}u(t,\eta,x) + \mathcal{L}_t u(t,\eta,x) 
= \psi\left (t,\eta,x, u(t,\eta,x), \partial_x u(t,\eta,x) \sigma(t,\eta,x), \mathcal{J} u(t,\eta,x)\right  )\, \\
u(T,\eta,x) = \phi(\eta,x)\, ,\quad t \in [0,T]\,\,\,,\,\,\, (\eta,x)\in \DDD \, .
\end{cases}
\, .
\]

As regard the notion of mild solution for the \textit{Kolmogorov equation} \eqref{EQN:MildKolmo}, we have to mention that different notions can be chosen. Our choice is due mainly to the fact that, since we do not require for differentiability assumptions, it seems to be the most suitable for financial applications. 
As an example, in option pricing one usually have that the terminal payoff of a given claim is Lipschitz continuous, without being differentiable.
Moreover, mild differentiability assumptions and the use of delayed coefficients, allow the above notion to be particularly suited to price exotic options, as in the case of \textit{Asian options}, see \cite{CDP}. Furthermore, the notion of mild solution we have chosen well emphasize the intrinsic stochastic nature of the problem, also providing an immediate connection to BSDE theory, hence allowing to treat general semilinear PIDE.
We refer to \cite{delong} for a comprehensive treatment of BSDE's with general L\'{e}vy noise, see also \cite{CDPBSDE} and references therein for a more financially oriented study of the topic.
 
We would also like to recall that different notions of mild solution for partial integro-differential already exist in literature, mostly considering \textit{Volterra-type} equations, allowing also to exhibit delays, we refer the interested reader to \cite{Mild1,Mild2,Mild3}. Also, in a setting similar to the present one, a notion of mild solution for SPDE's driven by $\alpha-$stable noise can be found in \cite{KLSu,KLSu2}, where the authors study mild solutions of semilinear parabolic equations in an infinite dimensional Hilbert space, in order to obtain the associated Hamilton-Jacobi-Bellman equation with applications to the stochastic optimal control problems.

Last but not least, rather recently a further notion of mild solution to delay equations has appeared in literature. This is an ad hoc generalization of the standard notion of \textit{viscosity solution}. In particular first in \cite{Vis1}, and then in \cite{Vis2,Vis3}, a new notion of viscosity solution to PDE with delays, called \textit{path-dependent PDE}, has been formulated, based on the newly developed \textit{functional It\^{o} calculus} mentioned above. Latter  notion has been also exploited to treat \textit{path-dependent} PDE with delayed generator, see, e.g., \cite{CDMZ}, or \cite{CDPCVA} for an application to mathematical finance.

The paper is organized as follows: in Section \ref{sfdd} we introduce necessary notations and formalize the tools necessary to treat delay equations in the Hilbert space $M^2$. In particular Section \ref{sfdd} is devoted to the characterization of fundamental results on SFDDE, such as existence and uniqueness, as well as the Markov property of the forward process. Thus subsection \ref{mall_jumps_d} is devoted to results concerning Malliavin calculus for delay equations which will be needed in order to prove the main representation theorem. In Section \ref{jointqv} we prove the main result based on Malliavin calculus, which is related to the study of the  joint quadratic variation of the forward equation and a suitable function; in Section \ref{k_eq} we give the \textit{non-linear Feynman-Kac theorem} that is later used to derive a deterministic representation to the FBSDE. Finally in Section \ref{SEC:OC} we give an application of obtained result to optimal control.

\section{Forward stochastic functional differential equation with delay} \label{sfdd}

In this Section we introduce the notation used throughout the paper, also presenting basic definitions and
main results related to the mathematical techniques involved in our approach. Some results are already established in literature, such as existence and uniqueness of solutions, whereas others  
are proved here for the first time.

Let us consider a probability space $(\Omega, \mathcal{F}, \left (\mathcal{F}_t\right )_{t\in [0,T]}, \prob)$, where 
$\left (\mathcal{F}_t\right )_{t\in [0,T]}$ is the \emph{natural} filtration jointly generated by the random variables 
$W(s)$ and $N(ds,dz),$ for all $z\in \mathbb{R}\setminus \{0\}=:\R_0$ and for all $s\in [0,T],$ augmented 
by all $\mathbb{P}$-null sets,  $W$ being a $1$-dimensional Brownian motion, while $N$ is 
a $1$-dimensional Poisson random measure, independent from $W$,  with associated L\'{e}vy measure $\nu(dz)$, satisfying
\begin{equation}\label{EQN:ConNu1}
\int_{\RR_0} \min \{1, z^2\} \nu(dz) < \infty \, ;
\end{equation}
also we define the compensated random measure $\tilde{N}(dt,dz):=N(dt,dz)-\nu(dz)dt$.

We will further assume in what follows that the L\'{e}vy measure $\nu$ satisfies
\begin{equation}\label{EQN:ConNu2}
\int_{\RR_0} |z|^2 \nu(dz) < \infty \, .
\end{equation}

We underline that condition \eqref{EQN:ConNu1} is a standard assumption in the definition of a L\'{e}vy measure $\nu$, whereas assumption \eqref{EQN:ConNu2} implies that the process has a finite second moment, which is a natural assumption if one has in mind financial applications.

In the following, we fix a {\it delay} $r >0$ and we will use the notation $X(t)$ to denote the present state 
at time $t$ of the real valued process $X$, whereas we use $X_t$ to denote the {\it segment} of the 
path described by $X$ during the time interval $[t-r,t]$ with values in a suitable infinite dimensional 
path space. In particular, we refer to the couple
\[
\left (\left (X(t+\theta)\right )_{\theta \in [-r,0]}, X(t)\right ) =: \left (X_t, X(t)\right )\, .
\] 
From now on, we define $\DDD := L^2 \times \RR := L^2([-r,0]; \real)\times \RR$, endowed with the scalar product
\[
\left \langle (X_t,X(t)),(Y_t,Y(t)) \right \rangle_{M_2} = \left \langle X_t,Y_t \right \rangle_{L^2} + X(t) \cdot Y(t)\, ,
\] 
and norm
\begin{equation}\label{normD}
\|\left (X_t, X(t)\right )\|_{\DDD}^2 = \|X_t\|_{L^2}^2+|X(t)|^2\, , 
\end{equation} 
namely the \textit{Delfour-Mitter space}, which is a separable Hilbert space, see, e.g., \cite{Moh} 
and reference therein for details. 

Furthermore, for any $p \in [2,\infty)$, we denote by $\Sp(t) := \Sp([0,t];\DDD)$ and we say 
that a $\DDD-$valued stochastic process $\left (X_s,X(s)\right )_{s \in [0,t]}$ belongs to $\Sp(t)$ if 
\[
\|X \|^p_{\Sp(t)} := \media \left[\sup_{s \,\in\, [0,t]} \|(X_{s},X(s))\|_{\DDD}^p \right]  < \infty \;.
\]
We denote for short $\Sp := \Sp(T)$. For the sake of simplicity, the following notation is used throughout 
the paper: $| \cdot |_2$ denotes the norm in $\DDD$ and $| \cdot |$ the absolute value in $\RR$.

\begin{rem}
Let us stress that we will consider here a $\RR-$valued stochastic process $X$, nevertheless any result that follows can be easily generalized to the case of an $\RR^d-$ valued stochastic process. In particular we would have considered the \textit{Delfour-Mitter space} $M^2([-r,0];\RR^d) := L^2([-r,0];\RR^d) \times \RR^d $, see, e.g. \cite{bcddr}.
\end{rem}

As briefly said in Section \ref{SEC:Intro}, the main goal of this work is to study a stochastic functional 
delay differential equation (SFDDE) of the form

\begin{equation}\label{EQN:DelayGenerale}
\begin{cases}
    \de X(t) = \mu(t, X_{t}, X(t) ) \de t + \sigma(t, X_{t}, X(t) ) \de W(t) + \int_{\real_0} \gamma(t,X_{t}, X(t) ,z) 
		\comp(\de t, \de z) \\
  	(X_0,X(0)) = (\eta,x) \in \DDD 
\end{cases}		
	\;,
\end{equation}

for all $t \,\in\, [0,T]$. We will assume the functionals $\mu, \sigma$ and $\gamma$ to fulfil the following assumptions.
 

\begin{ass}\label{ass1}
\begin{itemize}
\item[(A1)] the coefficients 
\begin{equation*}
\mu: [0,T] \times \DDD  \to \RR\, ,\quad  \sigma: [0,T] \times \DDD \to \RR\, , \quad 
\gamma :[0,T] \times \DDD \times \RR_0 \to \RR
\end{equation*}
are continuous. 

\item[(A2)] There exists $K > 0$ such that for all $t \,\in\, [0,T]$ and for all $(\eta_1,x_1)$, 
$(\eta_2,x_2) \,\in\, \DDD,$
\begin{align*}
|\mu(t,\EXUN) & - \mu(t,\EXDN)|^2 + |\sigma(t,\EXUN) - \sigma(t,\EXDN)|^2 \\ 
& + \int_{\RR_0} |\gamma(t,\EXUN,z) -\gamma(t,\EXDN,z)|^2 \nu(dz) \\ 
 & \leq K|(\EXU)-(\EXD)|_2^2 (1 + |(\EXU)|_{2}^2+|(\EXD)|_2^2)\, .
\end{align*}
\end{itemize} 
\end{ass}
%

Throughout the paper, we will look for \emph{strong solution} to equation \eqref{EQN:DelayGenerale} in the following sense.

\begin{defn}
We say that $X:= (X_t,X(t))_{t \in [0,T]}$ is a \textit{strong solution} to equation \eqref{EQN:DelayGenerale} 
if for any $t \in [0,T]$ $X$ is indistinguishably unique and $\left (\mathcal{F}_t \right )_{t\in [0,T]}$-adapted and it holds $\mathbb{P}-$a.s.
\[
\begin{split}
X(t) &= x  + \int_0^t \mu(s,X_s,X(s)) ds + \int_0^t \sigma(s,X_s,X(s))dW(s) \\ 
 & \qquad + \int_0^t \int_{\R_0} \gamma(s,X_s,X(s),z) \tilde{N}(ds,dz) \, ,\\
 X_0 &= \eta\, .
\end{split}
\]
\end{defn}
\noindent
In what follows we will denote by $\left(\XTE_t,\XTE(t)\right) $ the $\DDD-$value of the process at 
time $t \in [\tau,T]$, with initial value $(\EXZ) \in \DDD$ at initial time $\tau \in [0,T]$. However, for the sake of brevity, in most of the results, we will avoid to state the dependence on the initial value $(\tau,\eta, x)$ writing for short $\left (X_t,X(t)\right )$ instead of $\left(\XTE_t,\XTE(t)\right) $.

Now we provide an existence and uniqueness result for equation \eqref{EQN:DelayGenerale}. 

\begin{thm}\label{eu_f}
Suppose that $\mu$, $ \sigma$ and $\gamma$ satisfy conditions $(A1)-(A2)$ in Assumptions \ref{ass1}. Then, 
for all $t \in [0,T]$ and $(\eta,x) \in  \DDD,$ there exists a unique strong solution to the SFDDE \eqref{delay_eq} 
in $\Sp$ and there exists $C_1 :=C_1(K,L,T,p)$ such that
\begin{equation}\label{stima1}
\norma \XEX \norma^{p}_{\Sp} \leq C_1(1 + |(\EXZ)|_{2}^p)\, .
\end{equation}
Moreover, the map $(\EXZ) \mapsto \XEX$ is Lipschitz continuous from $\DDD$ to $\Sp$ and it exists 
$C_2:=C_2(K,L,T)$ such that 
\begin{equation}\label{EQN:LipStima}
\norma X^{\EXU} - X^{\EXD} \norma^{p}_{\Sp} \leq C_2 |(\EXU) - (\EXD)|_{2}^p\,.
\end{equation}
\end{thm}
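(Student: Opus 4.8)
The plan is to construct the solution by a Picard iteration run on stopped (localized) processes, to derive the a priori moment bound \eqref{stima1} via a \BDG/Kunita estimate followed by Gronwall's lemma, and finally to obtain the Lipschitz dependence \eqref{EQN:LipStima} by the same circle of ideas applied to the difference of two solutions. A device I would use throughout is the elementary bound on the memory term
\[
\|X_t\|_{L^2}^2 = \int_{-r}^0 |X(t+\theta)|^2\,\de\theta \leq \|\eta\|_{L^2}^2 + r\sup_{0\le s\le t}|X(s)|^2\,,
\]
which lets every occurrence of the segment $X_t$ in $\DDD$ be controlled by the running supremum of the present state, hence by the $\Sp(t)$-norm. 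I also note that, combined with a linear growth condition with constant $L$ (bounding $\mu,\sigma,\gamma$ at the origin) and assumption \eqref{EQN:ConNu2}, condition (A2) provides the control on the coefficients that prevents explosion.

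First I would introduce the map
\[
\Phi(X)(t) := x + \int_0^t \mu(s,X_s,X(s))\,\de s + \int_0^t \sigma(s,X_s,X(s))\,\de W(s) + \int_0^t\int_{\R_0}\gamma(s,X_s,X(s),z)\,\comp(\de s,\de z)
\]
on $\Sp(t)$, together with, for $M>0$, the stopping times $\tau_M := \inf\{t\ge 0 : \|(X_t,X(t))\|_{\DDD}\ge M\}$. Splitting $\Phi$ into its three integral terms, applying Hölder to the drift and the \BDG together with a Kunita-type inequality to the Brownian and compensated-Poisson integrals (the $L^2(\nu)$-control of $\gamma$ coming from (A2) and \eqref{EQN:ConNu2}), and inserting the memory bound above, I expect an inequality of the form
\[
\media\Big[\sup_{0\le s\le t\wedge\tau_M}\|(\Phi(X)_s,\Phi(X)(s))\|_{\DDD}^p\Big] \le C\big(1+|(\EXZ)|_2^p\big) + C\int_0^t \media\Big[\sup_{0\le u\le s\wedge\tau_M}\|(X_u,X(u))\|_{\DDD}^p\Big]\de s\,.
\]
Gronwall's lemma then shows that $\Phi$ maps $\Sp(t)$ into itself and, applied to the solution itself, yields the a priori estimate \eqref{stima1}. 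For existence and uniqueness, subtracting $\Phi(X)$ and $\Phi(\bar X)$, using (A2) and the same stochastic inequalities, I would get
\[
\|\Phi(X)-\Phi(\bar X)\|_{\Sp(t\wedge\tau_M)}^p \le C_{M}\, t\,\|X-\bar X\|_{\Sp(t\wedge\tau_M)}^p\,,
\]
so that $\Phi$ is a contraction on the localized space for $t$ small; patching finitely many subintervals of $[0,T]$ and letting $M\to\infty$ (using \eqref{stima1} to rule out explosion) yields a unique fixed point, i.e. the unique strong solution in $\Sp$.

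For the Lipschitz dependence \eqref{EQN:LipStima} I would write the difference of the two solutions $X^{\EXU}$ and $X^{\EXD}$, estimate the three integral terms exactly as above via (A2), \BDG and Kunita, and control the memory contribution through the segment bound. The polynomial weight $(1+|(\eta_1,x_1)|_2^2+|(\eta_2,x_2)|_2^2)$ appearing in (A2) is absorbed by means of the moment estimate \eqref{stima1} (Hölder's inequality in $\omega$), after which a Gronwall argument closes the estimate and produces the constant $C_2=C_2(K,L,T)$.

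The step I expect to be the main obstacle is precisely the \emph{local} (rather than global) nature of the Lipschitz and growth bounds in (A2): because of the weight $(1+|(\eta_1,x_1)|_2^2+|(\eta_2,x_2)|_2^2)$ the map $\Phi$ is only locally Lipschitz on $\Sp$, so the contraction must be set up on stopped processes and combined with the non-explosion estimate \eqref{stima1} before sending $M\to\infty$; keeping the constants uniform when absorbing the weight via \eqref{stima1} is the delicate bookkeeping. A secondary technical point is the jump integral, whose supremum-in-time $p$-th moment must be bounded by a Kunita-type inequality while only the $L^2(\nu)$-norm of $\gamma$ is furnished by (A2).
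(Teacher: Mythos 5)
Your proposal and the paper's proof are not the same argument: for existence, uniqueness and the moment bound \eqref{stima1} the paper gives no argument at all, it simply cites \cite[Th.~2.14]{bcddr}, and the only thing it actually proves is the Lipschitz estimate \eqref{EQN:LipStima}, via a few-line computation (\BDG inequality applied to the difference of the two solutions, then (A2), then Gronwall). Your self-contained construction (localization by stopping times, contraction on small subintervals, patching, non-explosion from the linear-growth constant $L$) is the standard route and is essentially what the cited reference does, so that part of your plan is sound in outline; note only that it is cleaner to truncate the coefficients to make them globally Lipschitz than to run a fixed-point argument on stopped processes, since the stopping time depends on the process and the localized space is not obviously stable under your map $\Phi$. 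Your observation about the jump term is also genuinely relevant: the paper's own display bounds the \BDG term by $\int_0^t\int_{\RR_0}|\Delta\gamma|^p\,\nu(dz)\,ds$, a quantity that (A2) does not control for $p>2$; be aware, though, that a Kunita-type inequality does not remove this difficulty, because Kunita's bound itself contains exactly such an $L^p(\nu)$ term, so only the case $p=2$ is clean under (A2) as written.

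The genuine gap is at the step you yourself flag as ``delicate bookkeeping'', and it is not merely bookkeeping. Writing $\Delta X := X^{\eta_1,x_1}-X^{\eta_2,x_2}$ and $W_s := 1+|(X^1_s,X^1(s))|_2^2+|(X^2_s,X^2(s))|_2^2$, after (A2) the Gronwall integrand is $\mathbb{E}\bigl[\,|\Delta X_s|_2^p\,W_s^{p/2}\bigr]$. Any nontrivial H\"older splitting gives $\mathbb{E}[\,|\Delta X_s|_2^{pq}]^{1/q}\,\mathbb{E}[W_s^{pq'/2}]^{1/q'}$ with $q>1$: the second factor is indeed bounded through \eqref{stima1}, but the first factor is the $L^{pq}(\Omega)$-norm of the difference, while the quantity on the left of the inequality you want to iterate is its $L^p(\Omega)$-norm. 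Gronwall therefore does not close; and attempting to first prove the estimate for the exponent $pq$ reproduces the same mismatch at exponent $pq^2$, so the regress never terminates. Moreover, whatever survives the absorption depends on $|(\eta_1,x_1)|_2$ and $|(\eta_2,x_2)|_2$ through \eqref{stima1}, so this route cannot produce the uniform constant $C_2=C_2(K,L,T)$ claimed in the statement. You should know that the paper does not resolve this either: between its two displays the polynomial weight in (A2) simply disappears, i.e.\ (A2) is silently treated as a global Lipschitz condition. Under (A2) as stated, an estimate of the form \eqref{EQN:LipStima} with a constant independent of the initial data is not reachable by H\"older plus Gronwall; one needs either globally Lipschitz coefficients, or substantially stronger tools (exponential moments, a stochastic Gronwall lemma) to handle the weight.
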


\begin{proof}
Existence and  uniqueness of the solution to equation \eqref{EQN:DelayGenerale}, as well as the estimate in equation \eqref{stima1}, are proved in \cite[Th. 2.14]{bcddr}.

As regards equation \eqref{EQN:LipStima}, exploiting the \BDG inequality, see, e.g. \cite[Section 4.4.]{App}, we have that, for any $t \in [0,T]$, denoting for short by $C$ several positive constants,
\begin{align*}
| & X^{\EXU} -X^{\EXD}|_{\Sp}^{p} =\\
&= \mathbb{E} \sup_{t \in [0,T]} |(X_t^{\EXU},X^{\EXU}(t))-(X_t^{\EXD},X^{\EXD}(t))|_2^p \leq\\
&\leq C |(\EXU) -(\EXD)|_2^p \\
 &+ C \left[ \int_0^t |\mu(s,X_s^{\EXU},X^{\EXU}(s))-\mu(s,X_s^{\EXD},X^{\EXD}(s))|^p ds \right. \\
 &\left. + \left (\int_0^t |\sigma(s,X_s^{\EXU},X^{\EXU}(s))-\sigma(s,X_s^{\EXD},X^{\EXD}(s))|^2 ds\right )^{\frac{p}{2}} \right. \\
 & \left. + \int_0^t \int_{\RR_0}|\gamma(s,X_s^{\EXU},X^{\EXU}(s),z)-\gamma(s,X_s^{\EXD},X^{\EXD}(s),z)|^p 
\nu(dz) ds \right] \, ,
\end{align*}
so that from the Lipschitz continuity in assumption \ref{ass1} $(A2)$, it follows
\[
\begin{split}
& \mathbb{E} \sup_{t \in [0,T]} |(X_t^{\EXU},X^{\EXU}(t))-(X_t^{\EXD},X^{\EXD}(t))|_2^p \leq\\
&\leq C |(\eta_1,x_1) - (\eta_2,x_2)|^p_2 +\\
&+ \int_0^T \sup_{s \in [0,q]}|(X_s^{\EXU},X^{\EXU}(s))-(X_s^{\EXD},X^{\EXD}(s))|_2^p ds \, ,
\end{split}
\]
and the claim follows from Grownall's inequality.
\end{proof}

\begin{rem}
We want to stress that a result analogous to Thm. \ref{eu_f} can be obtained by replacing the Delfour-Mitter space 
$\DDD$ with the space $\mathbf{D}$ of \cadlag functions, with the corresponding sup norm 
$\| \cdot \|_{\mathbf{D}} = \sup_{t \,\in\, [-r,0]} |\cdot|,$ see e.g. \cite{bcddr,Rie}. 
\end{rem}

One of the major results, when one is to lift the delay equation into an infinite dimensional setting exploiting the notion of \textit{segment}, is that one is able to recover the Markov property of the driving equation, see, e.g \cite[Theorem II.1]{Moh1}. Similarly also equation \eqref{EQN:DelayGenerale} results to be 
an $\DDD-$valued Markov process.

\begin{prop}\label{PRO:Markov}
Let $X =\left ((X_t,X(t))\right )_{t \in [0,T]}$ be the strong solution to equation \eqref{EQN:DelayGenerale}, then the process $X$ is a Markov process in the sense that
\[
\prob((X_t,X(t)) \,\in\, B | \mathcal{F}_s) = \prob((X_t,X(t)) \,\in\, B | (X_s,X(s))=(\eta,x)) \, ,\quad \mathbb{P}-a.s. \;,
\]
for all $0 \leq s \leq t \leq T$ and for all Borel sets $B \in \mathcal{B}(\DDD)$.
\end{prop}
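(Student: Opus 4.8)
The plan is to derive the Markov property from pathwise uniqueness together with the independence of the increments of the driving noise. The starting observation is a \emph{flow} (cocycle) property of the solution. Fix $0 \le s \le t \le T$ and, for a deterministic initial datum $\zeta = (\eta,x) \in \DDD$, denote by $\left(X^{s,\zeta}_u, X^{s,\zeta}(u)\right)_{u \in [s,T]}$ the unique strong solution of \eqref{EQN:DelayGenerale} started at time $s$ from $\zeta$. Because the segment $X_s$ records the whole history of the path on $[s-r,s]$, for any $u \ge s$ the segment $X_u$ is completely determined by $(X_s,X(s))$ together with the values $X(v)$, $v \in [s,u]$; consequently the restriction to $[s,T]$ of the solution started at time $0$ from $(\eta,x)$ solves the very same equation started at time $s$ from the $\mathcal{F}_s$-measurable datum $(X_s,X(s))$. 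By the uniqueness part of Theorem \ref{eu_f} the two must coincide, so that $\prob$-a.s.
\[
(X_t, X(t)) = \left(X^{s,(X_s,X(s))}_t, X^{s,(X_s,X(s))}(t)\right)\, .
\]

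Next I would isolate the probabilistic inputs. Writing $\Phi_{s,t}(\zeta)$ for the solution map $\zeta \mapsto \left(X^{s,\zeta}_t, X^{s,\zeta}(t)\right)$, the second step is to check that for fixed $s \le t$ the random field $\zeta \mapsto \Phi_{s,t}(\zeta)$ is, up to modification, measurable with respect to $\mathcal{G}_{s,t} := \sigma\big(W(v)-W(s),\, N((s,v]\times A) : s \le v \le t,\ A \in \mathcal{B}(\real_0)\big)$, and jointly measurable in $(\zeta,\omega)$. This follows from the Picard iteration used to construct the solution in \cite{bcddr}: each iterate, and hence the limit, is built only from $\zeta$ and from stochastic integrals against $W$ and $\comp$ over $[s,t]$, while the Lipschitz estimate \eqref{EQN:LipStima} gives continuity in $\zeta$ that upgrades pointwise measurability to joint measurability. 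The crucial point is that $\mathcal{G}_{s,t}$ is independent of $\mathcal{F}_s$, since $W$ has independent increments and $N$ is a Poisson random measure independent of $W$.

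The final step is the \emph{freezing lemma}. For a bounded Borel function $f : \DDD \to \real$ define $g(\zeta) := \media\big[f(\Phi_{s,t}(\zeta))\big]$. Since $(X_s, X(s))$ is $\mathcal{F}_s$-measurable while $\Phi_{s,t}(\cdot)$ is $\mathcal{G}_{s,t}$-measurable with $\mathcal{G}_{s,t}$ independent of $\mathcal{F}_s$, the substitution lemma for conditional expectations gives, using the cocycle identity above,
\[
\media\big[f(X_t, X(t)) \,\big|\, \mathcal{F}_s\big] = g\big((X_s, X(s))\big)\, , \quad \prob\text{-a.s.}
\]
The right-hand side is $\sigma((X_s,X(s)))$-measurable, hence it also equals $\media\big[f(X_t,X(t)) \mid (X_s,X(s))\big]$. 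Taking $f = \ind_B$ for $B \in \mathcal{B}(\DDD)$ yields exactly the claimed identity; by a monotone-class argument it suffices to treat such indicators, so this establishes the Markov property.

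The hard part will be the second step, namely the joint measurability and $\mathcal{G}_{s,t}$-adaptedness of the solution map together with a rigorous version of the cocycle identity in the delay setting: one must ensure that the $\DDD$-valued state truly encapsulates all the past information the coefficients require, so that no information from $\mathcal{F}_s$ beyond $(X_s,X(s))$ enters the dynamics on $[s,t]$. This is precisely where the choice of the Delfour--Mitter space as state space — rather than a pointwise description of $X$ — is essential, and where the separability of $\DDD$ is used to guarantee that the relevant maps and conditional expectations are well defined.
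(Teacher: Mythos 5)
Your proposal is correct in outline, but it is worth noting that the paper does not actually prove this proposition: the proof given is a pointer to \cite[Th.~3.9]{bcddr}, \cite[Prop.~3.3]{Rie} and \cite[Sec.~9.6]{Zab}. What you have written is essentially a reconstruction of the argument carried out in those references: (i) the cocycle identity obtained from pathwise uniqueness, (ii) measurability of the solution map $\zeta \mapsto \Phi_{s,t}(\zeta)$ with respect to the $\sigma$-algebra generated by the noise increments on $(s,t]$, which is independent of $\mathcal{F}_s$, and (iii) the freezing/substitution lemma for conditional expectations, followed by the tower-property observation that the resulting expression is $\sigma((X_s,X(s)))$-measurable. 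So your route coincides with the one the paper delegates to the literature, and each step is sound. One point you should make explicit rather than leave implicit: Theorem \ref{eu_f} is stated for \emph{deterministic} initial data $(\eta,x) \in \DDD$, whereas the cocycle identity requires uniqueness for the equation started at time $s$ from the \emph{random}, $\mathcal{F}_s$-measurable datum $(X_s,X(s))$. This is standard but needs an argument: either observe that the Gronwall-based uniqueness proof goes through verbatim for square-integrable $\mathcal{F}_s$-measurable initial conditions, or approximate $(X_s,X(s))$ by simple $\mathcal{F}_s$-measurable random variables (where the identity follows by pasting on the atoms) and pass to the limit using the Lipschitz estimate \eqref{EQN:LipStima}; the same approximation also delivers the joint measurability in $(\zeta,\omega)$ that the substitution lemma needs. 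With that step spelled out, your proof is complete and matches the standard one.
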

\begin{proof}
See, e.g. \cite[Th. 3.9]{bcddr}, or also, see, e.g., \cite[Prop. 3.3]{Rie} or \cite[Sec.9.6]{Zab}.
\end{proof}

Having shown in Proposition \ref{PRO:Markov} that $X$ is a $\DDD-$valued Markov process, we can therefore introduce the transition semigroup $P_{\tau,t},$ acting on the space of Borel bounded function on $\DDD,$ denoted by $B_b(\DDD)$, namely, we define
\begin{equation}\label{semigroup}
P_{\tau,t} : B_b(\DDD) \,\rightarrow\, B_b(\DDD)\, ,\quad  P_{\tau,t}[\varphi](x) := \media[\varphi(X^{\tau,\eta,x}_{t})]\, , \quad  \quad \varphi \in B_b(\DDD)\, .
\end{equation}

Concerning the infinitesimal generator $\mathcal{L}_t$ of equation \eqref{EQN:DelayGenerale}, following \cite{FMT,FT,FT1}, we will not enter in further details concerning its explicit representation or the characterization of its domain, since this goes beyond the aim of the present work and it is not necessary in order to prove the main results. Nevertheless let us mentioned that its form can be derived from a direct application of It\^{o}'s formula, see, e.g. \cite[Th. 3.6]{bcddr}.

%
%
%


\subsection{Malliavin calculus for jump processes with delay} \label{mall_jumps_d}

In this subsection we recall some definitions and main results concerning Malliavin operator and 
Skorokhod integral for jump processes. We will give fundamental definition in order to fix the 
notation and to recall the most effective results, we refer to \cite{mall,Pet} for further references 
and proofs of some results, or to \cite{OksRos,DI} for application of Mallavin calculus to delay equations.

In particular we stress that very few results concerning Malliavin calculus for jump processes in infinite dimension exist, where also the most simple case of jumps processes having values in an infinite dimensional Hilbert space is difficult to treat, we refer the interested reader to \cite{App2}. In order to avoid problems coming with the Hilbert space setting we will, in the present section, exploit the same ideas used in \cite{DI}. Using the fact that the original SDE has finite dimensional realizations. This will allow us to exploit standard results in Malliavin calculus for jumps processes with values in $\RR^d$. Also, in order to be able to do so, as in \cite{DI}, we must work with delay of integral type, which motivates the choice of the Hilbert space $M^2$.

In order to keep the present paper as much as self contained as possible, we will first recall definitions and fundamental results for Malliavin calculus for jumps processes mainly taken from \cite{mall}. Eventually we state the main result of the present subsection, that is, as done in \cite{DI} exploiting the finite dimensional nature of the SFDDE, we prove a Malliavin differentiability result for SFDDE. Also, for the sake of brevity, we will state the results just for the jump component and 
we refer to \cite{FT1,Moh1} for the diffusive part. 

Let us denote by $ I_n(f) $ the $n$-fold \emph{iterated stochastic integral} w.r.t. the random measure $\tilde{N},$ as

\begin{equation}\label{iter_int} 
I_n(f_n) := \int_{([0,T]\times \real_0)^n} f((t_1,z_1), \ldots, (t_n,z_n))\comp(\de t_1, \de z_1) \ldots 
\NT{\de t_n,\de z_n} \in L^2(\Omega)\;,
\end{equation}

where 
\[
f \in L^2\left (([0,T]\times \real_0)^n\right ) = L^2\left (([0,T]\times \real_0)^n),
\otimes \nu(dz)dt\right )\, ,
\] 
is a deterministic function. 

Thus, every random variable $F \in L^2(\Omega)$ can be represented as an infinite sum of iterated integrals 
of the form \eqref{iter_int}. This representation is known as \textit{chaos expansion}, see, e.g.\cite[Def. 12.1]{mall} or \cite[Th. 1]{Pet}.

\begin{thm}
The stochastic Sobolev space $\DD^{1,2}$ consists of $\mathcal{F}-$measurable random variable $F \in L^2(\Omega)$ such that, for $(f_n)_{n \geq 0},$ with $f_n \in L^2\left (([0,T]\times \real_0)^n\right ),$ it holds
\begin{equation} \label{chaos}
F= \sum_{n=0}^\infty I_n(f_n)\, ,
\end{equation}
with the following norm
\[
\| F \|^2 = \sum_{n=0}^\infty n n!\| I_n(f_n) \|_{ L^2\left (([0,T]\times \real_0)^n\right )}^2\, .
\]
\end{thm}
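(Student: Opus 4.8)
The plan is to reduce the statement to the classical Wiener--It\^{o} chaos decomposition for functionals of the compensated Poisson measure $\comp$, and then to read off the characterization of $\DD^{1,2}$ as the domain of the associated Malliavin derivative. First I would establish the two basic identities for the iterated integrals $I_n$ of \eqref{iter_int}: the \emph{isometry} $\media[I_n(f_n)^2] = n!\,\|\hat f_n\|^2_{L^2(([0,T]\times\real_0)^n)}$, where $\hat f_n$ denotes the symmetrization of $f_n$, and the \emph{orthogonality} $\media[I_n(f_n)\,I_m(g_m)] = 0$ for $n \neq m$. Both follow by expanding the defining integral, using the independence of the increments of $\comp$ over disjoint time--space regions and the compensation property $\media[\comp(\de t,\de z)] = 0$; the factor $n!$ arises from symmetrizing over the $n!$ orderings of the integration variables. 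These identities show that the maps $f_n \mapsto I_n(f_n)$ send the symmetric part of $L^2(([0,T]\times\real_0)^n)$ isometrically, up to the constant $n!$, onto mutually orthogonal closed subspaces $\mathcal{H}_n \subset L^2(\Omega)$, the \emph{Poisson chaoses}.

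The core analytic step is completeness: that $L^2(\Omega,\mathcal{F},\prob) = \bigoplus_{n\geq 0} \mathcal{H}_n$, i.e.\ every $\mathcal{F}$--measurable $F \in L^2(\Omega)$ admits the expansion \eqref{chaos}. I would prove this by exhibiting a total set in $L^2(\Omega)$ whose elements lie in the closed linear span of the chaoses. A convenient choice is the family of Dol\'eans--Dade stochastic exponentials $\mathcal{E}(h) = \exp\big(\int_0^T\!\!\int_{\real_0} h(s,z)\,\comp(\de s,\de z) - \int_0^T\!\!\int_{\real_0}(e^{h}-1-h)\,\nu(\de z)\de s\big)$ attached to simple deterministic integrands $h$, which are total in $L^2(\Omega)$ precisely because $\mathcal{F}$ is generated by $\comp$; expanding the exponential in its power series realizes $\mathcal{E}(h)$ explicitly as a convergent sum of iterated integrals $I_n(h^{\otimes n})$, so each $\mathcal{E}(h)$ lies in $\bigoplus_n \mathcal{H}_n$. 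Density of the span of $\{\mathcal{E}(h)\}$ then forces the orthogonal complement of $\bigoplus_n \mathcal{H}_n$ to be trivial, giving the decomposition and, by orthogonality, the Parseval identity $\|F\|^2_{L^2(\Omega)} = \sum_n n!\,\|\hat f_n\|^2$.

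Finally I would introduce the Malliavin derivative on a dense set of smooth functionals and verify that it acts on a single chaos as the lowering rule $D_{t,z} I_n(f_n) = n\, I_{n-1}\big(f_n(\cdot,(t,z))\big)$. Squaring, integrating in $(t,z)$ against $\nu(\de z)\de s$, and applying the level--$(n-1)$ isometry yields $\media\big[\int_0^T\!\!\int_{\real_0} (D_{t,z}F)^2\,\nu(\de z)\de s\big] = \sum_{n\geq 1} n\cdot n!\,\|\hat f_n\|^2$, which is exactly the series defining the norm in the statement (the $n=0$ term vanishing). Declaring $\DD^{1,2}$ to be the set of $F \in L^2(\Omega)$ for which this series converges, and checking that it coincides with the closure of the smooth functionals under the graph norm of $D$, so that $D$ is closable, completes the identification.

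I expect the main obstacle to be the completeness step: proving rigorously that the stochastic exponentials span a dense subspace and that their chaos series converge in $L^2$ requires careful use of the exponential integrability afforded by \eqref{EQN:ConNu1}--\eqref{EQN:ConNu2} and of the independence structure of $\comp$. The isometry, orthogonality and derivative rules are then comparatively routine bookkeeping. Since these facts are classical, an alternative and lighter route, consistent with the expository aim of the subsection, is to invoke the chaos decomposition of \cite{mall,Pet} directly and to take the displayed series as the definition of the $\DD^{1,2}$ norm.
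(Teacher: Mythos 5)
The paper does not actually prove this statement: it is recalled from the literature, with the chaos expansion attributed to \cite[Def. 12.1]{mall} and \cite[Th. 1]{Pet} and the space $\DD^{1,2}$ taken as in \cite[Def. 12.2]{mall}. So the honest comparison is between your from-scratch argument and a pure citation; your closing remark --- invoke \cite{mall,Pet} and take the displayed series as the definition of the norm --- is exactly what the authors do. Your longer route is the classical It\^{o} argument, and its architecture (isometry and orthogonality of the $I_n$, completeness via a total family of exponential functionals, the lowering rule $D_{t,z}I_n(f_n) = n\,I_{n-1}(f_n(\cdot,(t,z)))$, closability of $D$) is sound; in particular the bookkeeping $\media{\int_0^T\int_{\real_0}(D_{t,z}F)^2\,\nu(\de z)\de t} = \sum_{n\geq 1} n\cdot n!\,\|\hat f_n\|^2$ comes out correctly and matches the intended meaning of the norm in the statement.

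One step of your completeness argument needs repair. For Poisson noise, ``expanding the exponential in its power series'' does not produce iterated integrals: products of Poisson stochastic integrals generate diagonal terms (since $(\Delta N)^2 = \Delta N$, unlike the Gaussian case), so the $n$-th power of $\int h\,\de\comp$ is not $I_n(h^{\otimes n})$ plus lower-order chaoses in any direct way. The correct device is the Dol\'eans--Dade equation $\mathcal{E}_t = 1 + \int_0^t\int_{\real_0}\mathcal{E}_{s^-}\,g(s,z)\,\comp(\de s,\de z)$, whose Picard iteration yields $\mathcal{E}_T = \sum_{n}\frac{1}{n!}I_n(g^{\otimes n})$. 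Moreover, the exponential you wrote, $\exp\bigl(\int h\,\de\comp - \int(e^{h}-1-h)\,\nu\,\de s\bigr)$, is precisely this Dol\'eans exponential for $g = e^{h}-1$ (rewrite it as $\exp\bigl(\int h\,\de N - \int (e^{h}-1)\,\nu\,\de s\bigr)$ and set $h = \log(1+g)$); hence its chaos expansion is $\sum_n \frac{1}{n!}I_n\bigl((e^{h}-1)^{\otimes n}\bigr)$, not $\sum_n \frac{1}{n!}I_n(h^{\otimes n})$ as you claim. Neither point destroys the density argument --- the family is still total in $L^2(\Omega)$ by a monotone class argument, since $\mathcal{F}$ is generated by $N$, and it still lies in the closed span of the chaoses --- but the expansion must be derived through the stochastic differential equation, not through a na\"ive power series.
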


Given the \textit{chaos expansion} in equation \eqref{chaos}, we can introduce the \textit{Malliavin derivative} $D_{t,z}$ and its domain $\DM$, see, e.g. \cite[Def. 12.2]{mall}.

\begin{defn}
Let us consider a random variable $F \,\in\, \DD^{1,2},$ the \textit{Malliavin derivative} is the operator $D : \DD^{1,2} \subset L^2(\Omega) \,\to 
L^2(\Omega\times[0,T]\times\real_0)$ defined as
\begin{align}\label{m_der_t}
D_{t,z}F & = \sum_{n=1}^{\infty} n I_{n-1}(f_n(\cdot,t,z)), \; F \,\in\, \DD^{1,2}, \, z \neq 0 \, .
\end{align}
\end{defn}
Since the operator $D$ is closable, see, e.g., \cite[Thm. 3.3 and Thm 12.6]{mall}, we denote by $\DD^{1,2}$ the domain 
of its closure.

The following result represents a \emph{chain rule} for Malliavin derivative. 
\begin{thm}\label{THM:ChainMall2}
Let $F \,\in\, \DD^{1,2}$ and let $\phi$ be a real continuous function on $\real.$ Suppose $\phi(F) \,\in\, L^2(\Omega)$ 
and $\phi(F + D_{t,z}F) \,\in\, L^2(\Omega\times[0,T]\times\real_0).$ Then, $\phi \,\in\, \DD^{1,2}$ and 
\begin{equation}\label{chain}
D_{t,z}\phi(F) = \phi(F + D_{t,z}F) - \phi(F) \;.
\end{equation}
\end{thm}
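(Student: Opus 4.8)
The plan is to reduce the statement to the defining structural feature of the Malliavin derivative on the Poisson space, namely that here $D_{t,z}$ acts as a \emph{difference} (add-one-jump) operator rather than as a genuine derivative. Concretely, I would first recall from \cite{mall} the pathwise representation
\[
D_{t,z}F(\omega) = F(\omega_{t,z}) - F(\omega)\, ,
\]
valid for $F \,\in\, \DD^{1,2}$, for $\prob$-a.e. $\omega$ and $\nu(dz)dt$-a.e. $(t,z)$, where $\omega_{t,z}$ denotes the configuration obtained from $\omega$ by adjoining a single atom at the point $(t,z)$. To keep the argument self-contained I would verify this identity first on an iterated integral $F = I_n(f_n)$: inserting the extra atom into $\comp$ and expanding the difference, the terms in which exactly one integration variable is matched to $(t,z)$ sum, by symmetry of $f_n$, to $n I_{n-1}(f_n(\cdot,t,z))$, while multiple matchings drop out by the diagonal convention for the iterated integrals in \eqref{iter_int}; this is precisely $D_{t,z}I_n(f_n)$ as given by \eqref{m_der_t}. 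The general case then follows by linearity on finite chaos expansions and by the closedness of $D$.

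Granted this representation, the chain rule becomes immediate at the level of configurations. Applying the difference operator to the composition $\phi(F)$ and using that $\phi$ commutes with pointwise evaluation,
\[
D_{t,z}\phi(F)(\omega) = \phi\big(F(\omega_{t,z})\big) - \phi\big(F(\omega)\big) = \phi\big(F(\omega) + D_{t,z}F(\omega)\big) - \phi\big(F(\omega)\big)\, ,
\]
where in the last equality I substitute $F(\omega_{t,z}) = F(\omega) + D_{t,z}F(\omega)$, i.e. the same representation applied to $F$ itself. This is exactly \eqref{chain}. Note that no differentiability of $\phi$ is needed, which is the whole point of the jump setting: mere continuity of $\phi$ guarantees that $\phi(F(\omega_{t,z}))$ and $\phi(F(\omega))$ are well defined $\prob$-a.s., and there is no analogue of the chain factor $\phi'(F)$ that appears in the Gaussian theory.

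It remains to justify that $\phi(F)$ genuinely belongs to $\DD^{1,2}$, and here the two integrability hypotheses enter in precisely the right way. Membership in $\DD^{1,2}$ is characterized (again via \cite{mall}) by $\phi(F) \,\in\, L^2(\Omega)$ together with square-integrability over $\Omega\times[0,T]\times\real_0$ of the associated difference; the former is assumed outright, while the latter is exactly the hypothesis $\phi(F + D_{t,z}F) \,\in\, L^2(\Omega\times[0,T]\times\real_0)$ combined with $\phi(F)\,\in\,L^2(\Omega)$, since the right-hand side of \eqref{chain} is the difference of these two terms. Hence $\phi(F) \,\in\, \DD^{1,2}$ and \eqref{chain} holds as an identity in $L^2(\Omega\times[0,T]\times\real_0)$. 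I expect the main obstacle to lie not in the composition step, which is algebraically trivial once the difference representation is in hand, but in rigorously transferring that representation from the dense class of finite chaos expansions to all of $\DD^{1,2}$ and in matching the chaos-based norm of \eqref{chaos} with the difference-operator characterization of the domain. Should one wish to avoid the pathwise representation altogether, the alternative route is to establish \eqref{chain} first for polynomials $\phi$ by induction from the jump product rule $D_{t,z}(FG) = F D_{t,z}G + G D_{t,z}F + D_{t,z}F\, D_{t,z}G$, and then to pass to continuous $\phi$ by approximation, invoking the closability of $D$ and the stated $L^2$-bounds to control the limits.
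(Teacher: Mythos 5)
Your proposal is correct, but note that the paper itself does not prove this theorem at all: its ``proof'' is the single line ``See, e.g.\ \cite[Thm.\ 12.8]{mall}.'' What you have written is, in substance, a reconstruction of the proof of that cited result: the identification of $D_{t,z}$ with the add-one-jump difference operator $F(\omega_{t,z})-F(\omega)$ (verified on iterated integrals, then extended to $\DD^{1,2}$), followed by the observation that the composition step is purely algebraic and that the two integrability hypotheses are exactly what the difference-operator characterization of the domain requires in order to conclude $\phi(F)\in\DD^{1,2}$. This is the Nualart--Vives/L{\o}kka route that \cite{mall} itself follows, so your approach coincides with the one behind the citation rather than diverging from it. Two caveats, the first of which you partially flag yourself: the pathwise representation on all of $\DD^{1,2}$, and the characterization that $G\in\DD^{1,2}$ if and only if $G\in L^2(\Omega)$ and its difference lies in $L^2(\Omega\times[0,T]\times\real_0)$, are themselves the nontrivial theorems here --- closedness of $D$ alone does not transfer the pathwise identity from finite chaos expansions to the whole domain, one needs closability of the difference operator or the Fock-space isometry --- so your argument is only as complete as those two inputs; and second, since the paper's filtration is generated jointly by $W$ and $N$, the configuration $\omega_{t,z}$ must be understood as adjoining an atom to the Poisson component of the product canonical space only, a point which the paper also glosses over by stating its results ``just for the jump component.''
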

\begin{proof}
See, e.g. \cite[Thm. 12.8]{mall}.
\end{proof}

Once the Malliavin derivative has been defined, we are able to introduce its adjoint operator, 
the \textit{Skorokhod integral}, in particular next definition is taken from \cite[Def. 11.1]{mall}, see, also \cite[Sec. 3]{Pet} for details. 

\begin{defn}\label{DEF:M1}
Let $\delta : L^2(\Omega\times [0,T] \times \RR_0) \to L^2(\Omega)$ be the adjoint operator of the derivative $D.$ 
The set of processes $h \in  L^2(\Omega\times [0,T] \times \RR_0) $ such that
\[
\left | \mathbb{E} \int_0^T \int_{\RR_0} D_{s,z} F\, h_t(z)\, \nu(dz) ds \right | \leq C \|F\|\, ,
\]
for all $F \in \DM$, forms the domain of $\delta$, denoted by $dom\, \delta$.

For every $h \in dom \, \delta$ we can define the \emph{Skorokhod integral} as
\[
\delta(h) := \int_0^T \int_{\RR_0} h_t(z) \tilde{N}(\hat{d}t,dz)\, ,
\]
for any $ F \in \DM$.
\end{defn}

\begin{defn}\label{DEF:M2}
We denote by $\mathbb{L}^{1,2}$ the space of $\mathcal{F}-$adapted processes $h :\Omega \times [0,T] \times 
\RR_0 \to \RR$ such that $h_t \in \mathbb{D}^{1,2}$ and 
\begin{align*}
\mathbb{E} \int_0^T \int_{\RR_0} |h_t(z)| \nu(dz) dt & < \infty\, \\
\mathbb{E} \int_{\left ([0,T] \times \RR_0\right )^2}|D_{t,z}h_s(\zeta)| \nu(d\zeta) ds \nu(d z) dt & < \infty\, .
\end{align*}
\end{defn}

From Definitions \ref{DEF:M1}--\ref{DEF:M2} above, we have that $\mathbb{L}^{1,2} \subset dom \, \delta$. 
If $h \in \mathbb{L}^{1,2}$ and $D_{t,z} h \in dom \, \delta$,  then $\delta(h) \in \mathbb{D}^{1,2}$ and 
\begin{equation}\label{EQN:DeltSk}
D_{t,z} \delta(h) = h(z) + \delta(D_{t,z} h)\, ,
\end{equation}
see, e.g. \cite{Sol2}. Notice also that $\mathbb{L}^{1,2} \simeq L^2 ([0,T];\mathbb{D}^{1,2})$. 

\begin{prop}\label{PRO:MallInt}
Let $h_t$ be a predictable square integrable process. Then, if $h \in \DM,$ we have, for a.e. $(s,z)\in [0,t] \times \RR_0$,
\begin{align*}
D_{s,z} \int_0^t h_\tau d\tau &  =  \int_s^t D_{\tau,z} h_\tau d\tau \, ,\\
D_{s,z} \int_0^t h_\tau dW(\tau) &  =  \int_s^t D_{\tau,z} h_\tau dW(\tau)  \, ,\\
D_{s,z} \int_0^t\int_{\RR_0} h_\tau \tilde{N}(d\tau,dz) & = h_s + \int_s^t \int_{\RR_0} 
D_{\tau,\zeta} h_\tau \tilde{N}(d\tau,d \zeta)  \,.
\end{align*}
\end{prop}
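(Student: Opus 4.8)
The plan is to establish each of the three commutation formulas first for \emph{simple} predictable integrands and then to extend to general $h \in \DM$ by a closability-and-density argument, exploiting throughout the non-anticipativity of the Malliavin derivative: since $h$ is predictable, $h_\tau$ is $\FF_\tau$-measurable, and hence $D_{s,z} h_\tau = 0$ whenever $\tau < s$. This last property is precisely what turns the lower integration limit from $0$ into $s$ on the right-hand sides.

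For the Lebesgue integral I would take $h_\tau = \sum_i F_i \ind_{(t_i,t_{i+1}]}(\tau)$ with $F_i \in \DM$ and $\FF_{t_i}$-measurable, so that $\int_0^t h_\tau \de\tau = \sum_i F_i (t_{i+1}\wedge t - t_i\wedge t)$. By linearity of $D_{s,z}$ one gets $D_{s,z}\int_0^t h_\tau \de\tau = \int_0^t D_{s,z} h_\tau \de\tau$, and non-anticipativity collapses this to $\int_s^t D_{s,z} h_\tau \de\tau$. Approximating a general $h$ by such simple processes in the $\mathbb{L}^{1,2}$-norm and invoking the closability of $D$ (recalled after \eqref{m_der_t}) together with the continuity of the integral yields the first identity. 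The Brownian case is handled identically on simple integrands, $\int_0^t h_\tau \de W(\tau) = \sum_i F_i (W(t_{i+1}\wedge t) - W(t_i\wedge t))$: because the jump Malliavin derivative $D_{s,z}$ acts on the Poisson noise, which is independent of $W$, it annihilates the Brownian increments, so $D_{s,z}$ passes through the sum onto the coefficients $F_i$ and produces no boundary term; the It\^o isometry then guarantees the approximation converges and the formula extends to all predictable square-integrable $h \in \DM$.

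The jump integral is the one genuinely using the jump structure. For predictable $h$ the It\^o integral $\int_0^t\int_{\RR_0} h_\tau \comp(\de\tau,\de z)$ coincides with the Skorokhod integral $\delta(h\ind_{[0,t]})$, so I would apply the commutation relation \eqref{EQN:DeltSk}, namely $D_{s,z}\delta(g) = g_s(z) + \delta(D_{s,z}g)$, with $g = h\ind_{[0,t]}$. For $s \le t$ this produces the boundary term $h_s$ together with $\delta(D_{s,\cdot}(h\ind_{[0,t]})) = \int_0^t\int_{\RR_0} D_{s,\zeta} h_\tau \comp(\de\tau,\de\zeta)$, and one last appeal to non-anticipativity reduces the lower limit to $s$, giving the third identity.

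The main obstacle is the extension step rather than the computation on simple processes: one must check that simple predictable integrands are dense in the relevant domain in a topology under which both the stochastic integral and $D$ are continuous, so that the identities survive the passage to the limit. This requires controlling $\media\int_0^T\int_{\RR_0}|D_{s,z}(h - h^n)_\tau|^2\,\nu(\de z)\,\de s$ alongside the $L^2(\Omega)$-convergence of the integrals, which is exactly what membership of $h$ in $\mathbb{L}^{1,2}\simeq L^2([0,T];\DM)$ provides. A secondary care point is the correct bookkeeping of the boundary term $h_s$ in the Poisson case, which hinges on identifying the It\^o integral with the Skorokhod integral for predictable integrands and on evaluating the $g_s(z)$ term of \eqref{EQN:DeltSk} at the right point.
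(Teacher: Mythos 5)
Your proposal is correct, but note that the paper itself contains no proof of Proposition \ref{PRO:MallInt}: the stated ``proof'' is a bare citation to \cite[Prop.~6]{Pet}, so the comparison is really with the cited literature. There, and in \cite{mall}, these commutation formulas are obtained by working directly on the chaos expansion \eqref{chaos}: one expands $h_\tau$ in iterated integrals, integrates term by term (each integration raises the chaos order), and differentiates using \eqref{m_der_t}, so the boundary term $h_s$ in the Poisson case and the absence of a boundary term in the Brownian case drop out mechanically. Your route -- simple predictable integrands, closability of $D$, and, for the jump integral, the identification of the It\^{o} integral with the Skorokhod integral for predictable integrands combined with the commutation relation \eqref{EQN:DeltSk} -- is a genuinely different and equally valid argument, and it has the advantage of using only facts already quoted in the paper, so it could serve as a self-contained proof. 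Two points deserve care. First, in the Brownian case, ``$D_{s,z}$ annihilates Brownian increments'' is not by itself enough to pass the derivative through the product $F_i\,\bigl(W(t_{i+1}\wedge t)-W(t_i\wedge t)\bigr)$: you need that $D_{s,z}$ acts as a difference operator, $D_{s,z}(FG)=F\,D_{s,z}G+G\,D_{s,z}F+D_{s,z}F\,D_{s,z}G$ (a consequence of Theorem \ref{THM:ChainMall2}), which together with $D_{s,z}\bigl(W(t_{i+1})-W(t_i)\bigr)=0$ gives what you claim. Second, your density step is indeed where the work lies, and it succeeds precisely because $h$ is adapted: then $\tau\mapsto D_{s,z}h_\tau$ is again adapted and vanishes for $\tau<s$, so its Skorokhod integral is an It\^{o} integral and membership of $h$ in $\mathbb{L}^{1,2}\simeq L^2([0,T];\DM)$ suffices, with no need for the stronger hypothesis $D_{s,z}h\in dom\,\delta$ required by \eqref{EQN:DeltSk} for general integrands. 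Finally, you prove the identities with $D_{s,z}h_\tau$ under the integral signs; this is the correct standard statement and the one the paper actually uses later (cf.\ \eqref{EQN:Mall1}), the $D_{\tau,z}h_\tau$ printed in the proposition being a typo.
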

\begin{proof}
ee, e.g. \cite[Prop. 6]{Pet}.
\end{proof}

Next result is the chain rule for SFDDE, that is the generalization of Theorem \ref{THM:ChainMall2} to the case of delay equations, that will be needed in the proof of the main result of the present Section as well as in subsequent sections.

\begin{thm}\label{THM:ChainMallF}
Let $F$ and $\psi \,\in\, \DD^{1,2}$, let also $\phi$ be a real valued continuous function on $M^2$ Suppose $\phi(\psi,F) \,\in\, L^2(\Omega)$  and $\phi(\psi + D_{t,z} \psi, F + D_{t,z}F) \,\in\, L^2(\Omega\times[-r,T]\times\real_0).$ Then, $\phi \,\in\, \DD^{1,2}$ and it holds
\begin{equation} \label{eq:chain_rule}
D_{t,z} \phi(\psi,F) = \phi(\psi + D_{t,z} \psi, F + D_{t,z} F) - \phi(\psi,F) \;.
\end{equation}
\end{thm}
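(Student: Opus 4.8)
The plan is to reduce the delay chain rule of Theorem~\ref{THM:ChainMallF} to the already-established scalar chain rule of Theorem~\ref{THM:ChainMall2}. The essential observation is that, since we work in the Delfour--Mitter space $\DDD = L^2 \times \RR$ and the delay enters only through integral-type functionals, the pair $(\psi, F) \in \DD^{1,2} \times \DD^{1,2}$ can be treated componentwise, and the argument $\phi(\psi, F)$ is just a continuous function of (effectively) finitely many, or countably many approximating, real-valued Malliavin-differentiable random variables. Concretely, I would first recall from \eqref{m_der_t} and Theorem~\ref{THM:ChainMall2} that for a single $F \in \DD^{1,2}$ the Malliavin derivative acts via the jump-difference formula $D_{t,z}\phi(F) = \phi(F + D_{t,z}F) - \phi(F)$, which holds because the derivative $D_{t,z}$ for a pure-jump functional is genuinely a finite-difference operator, not an infinitesimal one.

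First I would set up the two-variable version by considering the map $(\psi, F) \mapsto \phi(\psi, F)$ and noting that, since $\phi$ is continuous on $M^2$ and both $\psi, F \in \DD^{1,2}$, one may apply the scalar result to the ``stacked'' variable. The cleanest route is to treat $(\psi, F)$ as an element of a product space and verify that the difference-operator structure of $D_{t,z}$ respects products: because $D_{t,z}$ acts on each chaos component as in \eqref{m_der_t} and, for jump functionals, satisfies the finite-difference identity, applying $D_{t,z}$ to $\phi(\psi, F)$ shifts \emph{both} arguments simultaneously by their respective derivatives, giving
\[
D_{t,z}\phi(\psi, F) = \phi(\psi + D_{t,z}\psi, F + D_{t,z}F) - \phi(\psi, F)\,.
\]
The integrability hypotheses $\phi(\psi, F) \in L^2(\Omega)$ and $\phi(\psi + D_{t,z}\psi, F + D_{t,z}F) \in L^2(\Omega \times [-r,T] \times \real_0)$ are exactly what is needed to guarantee that the right-hand side is a well-defined element of $L^2(\Omega \times [-r,T] \times \real_0)$, and hence that $\phi(\psi, F) \in \DD^{1,2}$ with the stated derivative.

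The rigorous justification I would give proceeds by approximation: first establish the identity for $\phi$ a smooth (or polynomial) function of two variables, where it follows from Theorem~\ref{THM:ChainMall2} applied to the one-dimensional functional $t \mapsto \phi(\psi, F)$ after expressing $\phi$ through its expansion; then extend to general continuous $\phi$ by approximating $\phi$ uniformly on compacts by smooth functions $\phi_n$, using that the jump-difference formula is \emph{exact} (no derivative of $\phi$ appears), so passing to the limit requires only the $L^2$-convergence guaranteed by the stated integrability assumptions together with the closability of $D$ noted after \eqref{m_der_t}. The closability is what lets me conclude that the limiting object is genuinely the Malliavin derivative of $\phi(\psi, F)$ rather than merely an $L^2$-limit.

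The main obstacle will be the approximation step for general continuous $\phi$ on the infinite-dimensional space $M^2$: unlike the finite-dimensional scalar case of Theorem~\ref{THM:ChainMall2}, one must be careful that the smooth approximants $\phi_n$ and their shifted evaluations $\phi_n(\psi + D_{t,z}\psi, F + D_{t,z}F)$ remain uniformly $L^2$-dominated so that dominated convergence applies on $\Omega \times [-r,T] \times \real_0$. The saving feature, which I would emphasize, is precisely that no gradient of $\phi$ enters the formula \eqref{eq:chain_rule}; because $D_{t,z}$ is a pure finite-difference operator in the jump setting, the chain rule is an algebraic shift identity rather than a Leibniz-type rule, so continuity of $\phi$ (plus the integrability hypotheses) suffices and no differentiability of $\phi$ is ever required. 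This is exactly the structural reason the result holds under such mild assumptions, and it is what makes the extension from the scalar Theorem~\ref{THM:ChainMall2} to the $M^2$-valued delay setting essentially routine once the componentwise action of $D_{t,z}$ is recognized.
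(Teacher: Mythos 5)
There is a genuine gap, and it sits exactly at the step you label as ``routine.'' The scalar chain rule of Theorem~\ref{THM:ChainMall2} applies to continuous functions of a \emph{real-valued} random variable (or, in its standard multivariate form, of finitely many real-valued random variables). In Theorem~\ref{THM:ChainMallF} the first argument $\psi$ is $L^2([-r,0];\real)$-valued, i.e.\ infinite-dimensional, so there is no ``stacked variable'' to which the scalar result can be applied: your assertion that $D_{t,z}$ ``shifts both arguments simultaneously by their respective derivatives'' for a continuous $\phi$ on $M^2$ is precisely the statement being proven, and invoking the ``difference-operator structure'' to justify it is circular. The missing idea is the one the paper uses: discretize the path argument. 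One takes a partition $\Pi_k$ of $[-r,0]$, the averaging projection $Q_{\underline{s}^k}$ and the piecewise-constant embedding $I_k$, sets $\phi^k(x_1,\dots,x_k,y):=\phi(I_k(x_1,\dots,x_k),y)$, applies the finite-dimensional chain rule to $\phi^k$ (legitimate, since $Q_{\underline{s}^k}(\psi)$ is a vector of finitely many real-valued elements of $\DD^{1,2}$, and the linear, deterministic projection commutes with $D_{t,z}$), and then lets $k\to\infty$ using $T^k\psi\to\psi$ in $L^2$, continuity of $\phi$, dominated convergence, and closability of $D$. Without this (or an equivalent) finite-dimensional reduction of $\psi$, the identity \eqref{eq:chain_rule} is never actually derived from Theorem~\ref{THM:ChainMall2}.

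Your approximation effort is also aimed at the wrong object. Smoothing $\phi$ (polynomial or smooth $\phi_n$, uniform convergence on compacts) is both unnecessary and insufficient: unnecessary because Theorem~\ref{THM:ChainMall2} already holds for merely continuous functions, so no regularization of $\phi$ is ever needed; insufficient because even for a polynomial $\phi$ on $M^2$ the first argument is still infinite-dimensional, so the scalar theorem still does not apply and the same gap remains. To your credit, your opening remark that $\phi(\psi,F)$ is ``a continuous function of finitely many, or countably many approximating, real-valued Malliavin-differentiable random variables'' points in the correct direction --- but that is the step that must be constructed and then passed to the limit, and your proposal never does it, spending the limiting argument on $\phi_n\to\phi$ instead of on $T^k\psi\to\psi$.
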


\begin{proof}
Following \cite[Proposition 6.2]{Fen}, let us define a partition of $[-r, 0]$,
\[
\Pi_k : -r \leq s_1 < \ldots < s_k \leq 0\, ,
\]
with 
\[
\|\Pi_k\| := \max_{2 \leq i \leq k}(s_i - s_{i-1}) \,\rightarrow\, 0, \quad \mbox{ as } \, k \,\rightarrow\, \infty\, .
\]

Let $I_k: \real^k \,\rightarrow\, L^2([-r,0],\real)$ be the continuous linear embedding associated to 
the partition $\Pi_k$ as 
\[
I_k(x_1, \ldots, x_k)(t) := \sum_{i=1}^k x_i I_{(s_{i-1}, s_i]}(t) \;,
\] 
and set $\underline{s}^k$ the tuple $(s_1, \ldots, s_k)$. Let us also define 
\[
Q_{\underline{s}^k}(\psi) := \left(\frac{1}{s_1 - s_0} \int_{s_0}^{s_1} \psi(t) \de t, \ldots, 
\frac{1}{s_k - s_{k-1}} \int_{s_{k-1}}^{s_k} \psi(t) \de t \right)\, ,
\]
the $L^2$ projection for $\psi \,\in\, L^2([-r,0],\real)$. Finally, we define a linear map $T^k:L^2([-r,0],\real) \to L^2([-r,0],\real)$ as
\[
T^k \, : \, \psi \mapsto \psi^k := T^k \psi := I_k \circ Q_{\underline{s}^k}(\psi)\, ;
\]
in particular it holds that $T^k \psi \to \psi$ in $L^2([-r,0],\real)$ as $k \to \infty$, see, e.g. \cite[Lemma 5.1]{Fen}.

We thus define the function $\phi^k : \RR^k \times \RR \to \RR$, so that, from the classical chain rule Theorem \ref{THM:ChainMall2} applied to $\phi^k$ we have
\[
\begin{split}
D_{t,z} \phi(\psi^k,F)&= D_{t,z} \phi^k\left ( Q_{\underline{s}^k}(\psi),F\right )=\\
&=\phi^k( Q_{\underline{s}^k}\left (\psi + D_{t,z} \psi\right ), F + D_{t,z} F) - \phi^k( Q_{\underline{s}^k}\left (\psi\right ),F) =\\
&=\phi(I_k \circ Q_{\underline{s}^k}\left (\psi+ D_{t,z} \psi\right ), F + D_{t,z} F) - \phi(I_k \circ Q_{\underline{s}^k}\left (\psi\right ),F)\, .
\end{split}
\]

Then the claim follows taking the limit as $k \to \infty$ together with \cite[Lemma 5.1]{Fen}, the continuity of $\phi$ and the Dominated Convergence Theorem.
\end{proof}

We are finally able to prove next theorem, which is the main result of the current subsection concerning Malliavin differentiability of the SFDDE \eqref{EQN:DelayGenerale}.

\begin{thm}\label{THM:MallDiff}
Let us suppose that Assumptions \ref{ass1} (A1)-(A2) hold and $X=\left (X(t)\right )_{t \in [-r,T]}$ is the solution to equation \eqref{EQN:DelayGenerale}. Then, 
$X \,\in\, L^2\left ([-r,T]; \mathbb{D}^{1,2} \right )$ and, for every $s \in [0,T]$ and $z \in \RR_0,$ the stochastic process $\{D_{s,z} X(t) \, : \, t \in [s,T]\}$ satisfies
\begin{equation}\label{EQN:A}
\E{\int_0^T \int_{\RR_0} \sup_{t \in [s,T]}|D_{s,z} X(t)|^2 \nu(dz)ds } < \infty\, \, .
\end{equation}

In particular, for any $t \in [0,T]$, $X(t) \in \mathbb{D}^{1,2}$ and it holds

\begin{equation}\label{EQN:MallDiffSFDDE1}
\begin{cases}
D_{s,z} X(t) &= \gamma(s,X_s,X(s),z) + \\
&+\int_s^t \left (\mu\left (X_u + D_{s,z}X_u,X(u) + D_{s,z}X(u)\right ) - \mu(X_u,X(u))\right ) du+ \\ 
& +\int_s^t \left (\sigma\left (X_u + D_{s,z}X_u,X(u) + D_{s,z}X(u)\right ) - \sigma(X_u,X(u)) \right )dW(u) +\\
&+ \int_s^t \int_{\RR_0} \left ( \gamma\left (X_u + D_{s,z}X_u,X(u) + D_{s,z}X(u)\right ) - \gamma(X_u,X(u))\right ) \tilde{N}( du,d \zeta)\, ,\\
D_{s,z} X(t) &= 0\, ,\quad t \in [-r,s)\, , \\
\end{cases}
\, ,
\end{equation}

Moreover, for any $z \in \RR_0$, there exists a measurable version of the two-parameter process
\[
D_{s,z} X_t = \left \{D_{s,z} X_t(\theta) \, : \, s \in [0,T] \, , \theta \in [-r,0]  \right  \}\, .
\]
\end{thm}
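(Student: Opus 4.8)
The plan is to construct the Malliavin derivative through a Picard approximation of \eqref{EQN:DelayGenerale}, differentiating each iterate by means of the commutation rules of Proposition \ref{PRO:MallInt} and the chain rule of Theorem \ref{THM:ChainMallF}, and then identifying the limit through the closability of $D$. First I would set $X^{(0)}(t) := x$ for $t \in [0,T]$, $X_0^{(0)} := \eta$, and define recursively
\begin{align*}
X^{(n+1)}(t) = x &+ \int_0^t \mu(u, X_u^{(n)}, X^{(n)}(u))\, \de u + \int_0^t \sigma(u, X_u^{(n)}, X^{(n)}(u))\, \de W(u) \\
&+ \int_0^t \int_{\RR_0} \gamma(u, X_u^{(n)}, X^{(n)}(u), z)\, \comp(\de u, \de z)\, .
\end{align*}
By Theorem \ref{eu_f}, $X^{(n)} \to X$ in $\Sp$, and the moment estimate \eqref{stima1} holds uniformly in $n$, a fact I would use repeatedly below.

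Next I would show by induction that $X^{(n)}(t) \in \DD^{1,2}$ for every $t$, and derive the recursion satisfied by the derivatives. The base step is immediate, since $X^{(0)}$ is deterministic and hence $D_{s,z} X^{(0)} \equiv 0$. For the inductive step, applying $D_{s,z}$ to the recursion and invoking Proposition \ref{PRO:MallInt}---in particular the boundary term $h_s$ generated by the compensated jump integral, which produces the leading term $\gamma(s, X_s^{(n)}, X^{(n)}(s), z)$---together with the chain rule Theorem \ref{THM:ChainMallF} applied to $\mu$, $\sigma$ and $\gamma$ in their difference form, yields an equation for $D_{s,z} X^{(n+1)}$ of exactly the type \eqref{EQN:MallDiffSFDDE1} with $X$ replaced by $X^{(n)}$ on the right-hand side. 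Here I would also record that the segment derivative is merely a time shift, $D_{s,z} X_u^{(n)}(\theta) = D_{s,z} X^{(n)}(u+\theta)$ for $\theta \in [-r,0]$, so that no genuinely new object appears.

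I would then establish uniform-in-$n$ bounds together with a Cauchy property. Setting
\[
\Psi_n(t) := \media \int_0^t \int_{\RR_0} \sup_{v \in [s,t]} |D_{s,z} X^{(n)}(v)|^2\, \nu(\de z)\, \de s\, ,
\]
and estimating the recursion by the Burkholder--Davis--Gundy inequality (as in Theorem \ref{eu_f}) and the local Lipschitz bound (A2), I would aim for a Gr\"onwall inequality in $\Psi_n$. The main obstacle is precisely the multiplicative growth factor $(1 + |\cdot|_2^2 + |\cdot|_2^2)$ in (A2): since one of its arguments is $(X_u^{(n)} + D_{s,z} X_u^{(n)}, \ldots)$, the factor contains $|D_{s,z} X^{(n)}|^2$, so a naive estimate becomes quartic in the unknown and the linear Gr\"onwall scheme breaks down. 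I would resolve this by a localization argument: introduce the stopping times $\theta_N := \inf\{ t : |(X_t^{(n)}, X^{(n)}(t))|_2 > N\}$, on which the effective Lipschitz constant is bounded by $K(1+2N^2)$ plus a term controlled by the derivative itself, absorb the latter over sufficiently small time increments, iterate over a partition of $[0,T]$, and finally let $N \to \infty$ using the uniform moments \eqref{stima1}. The same scheme applied to the differences $D_{s,z}(X^{(n+1)} - X^{(n)})$ shows that $(D X^{(n)})_n$ is Cauchy in $L^2(\Omega \times [0,T] \times \RR_0)$ and that $\sup_n \Psi_n(T) < \infty$.

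Finally, since $X^{(n)}(t) \to X(t)$ in $L^2(\Omega)$ while $D X^{(n)}(t)$ converges in $L^2(\Omega \times [0,T] \times \RR_0)$, the closability of $D$ gives $X(t) \in \DD^{1,2}$ with $D_{s,z} X(t) = \lim_n D_{s,z} X^{(n)}(t)$. Passing to the limit in the recursion---using the continuity of the coefficients (A1), the bound (A2) and dominated convergence---produces \eqref{EQN:MallDiffSFDDE1}; the uniform bound $\sup_n \Psi_n(T) < \infty$ passes to the limit to give \eqref{EQN:A}, whence $X \in L^2([-r,T]; \DD^{1,2})$. The measurable version of the two-parameter process follows from $D_{s,z} X_t(\theta) = D_{s,z} X(t+\theta)$ together with the joint measurability in $(s,z,t)$ inherited from the limiting construction.
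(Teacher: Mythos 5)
Your proposal follows essentially the same route as the paper's proof: a Picard scheme, an induction in which Proposition \ref{PRO:MallInt} produces the boundary term $\gamma(s,X^{n}_s,X^{n}(s),z)$ and the chain rule (Theorems \ref{THM:ChainMall2} and \ref{THM:ChainMallF}) puts the derivative of the coefficients in difference form, a Gr\"onwall-type estimate for the quantity you call $\Psi_n$ (the paper's $\xi_n$), and identification of the limit via closability. Two deviations are worth noting. First, for the limit the paper does not establish a Cauchy property of $(DX^{n})_n$; it uses the standard weaker fact that $X^{n}(t)\to X(t)$ in $L^2(\Omega)$ together with $\sup_n \mathbb{E}\int_{-r}^T|D_{s,z}X^n(\tau)|^2 d\tau<\infty$ (obtained by a discrete Gr\"onwall lemma) already yields $X(t)\in\mathbb{D}^{1,2}$; both arguments are standard and fine. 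Second, and more substantively, you flag the quartic term generated by the polynomial factor in (A2), which the paper does not address at all: in \eqref{EQN:StimFinaMl} the factor $(1+|\cdot|_2^2+|\cdot|_2^2)$ is silently absorbed into constants via $\lambda=\sup_n\mathbb{E}\sup_{-r\leq s\leq T}|X^n(s)|_2^2$, i.e.\ the estimate is carried out exactly as if the coefficients were globally Lipschitz. So your concern points at a real imprecision in the paper rather than at a defect of the overall scheme.

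However, the repair you propose does not work as described, so your own argument still has a gap at that point. The stopping time $\theta_N$ bounds $|(X^{n}_t,X^{n}(t))|_2$ but not $|D_{s,z}X^{n}|$, so on $\{t\leq\theta_N\}$ the bound from (A2) still reads $K|D_{s,z}X^{n}|_2^2(1+3N^2)+2K|D_{s,z}X^{n}|_2^4$, and the quartic term survives. No choice of a small time mesh lets you absorb it into a linear Gr\"onwall inequality: $\mathbb{E}|DX|^4$ is not dominated by $(\mathbb{E}|DX|^2)^2$, and you have no a priori fourth moments of $D_{s,z}X^{n}$ (condition \eqref{EQN:ConNu2} only provides second moments in $z$). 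The standard fix is different: truncate the coefficients so that they become globally Lipschitz, prove the theorem for the truncated equation, and then remove the truncation using the locality of the Malliavin derivative together with the uniform moment bound \eqref{stima1}; alternatively, exploit the difference-operator nature of $D_{s,z}$, namely $D_{s,z}X(t)=X^{s,z}(t)-X(t)$ with $X^{s,z}$ the solution restarted at time $s$ from the shifted datum $X(s)+\gamma(s,X_s,X(s),z)$, and deduce \eqref{EQN:A} from a stability estimate of the type \eqref{EQN:LipStima} in Theorem \ref{eu_f}. Either device would close the gap that both your sketch and, implicitly, the paper leave open.
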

\begin{proof}
We will use a standard Picard's approximation scheme, see, e.g. \cite[Th. 17.2]{mall}. Let $X^0(t) = x$ and $X^0_t = \eta$, then set, for $n > 0$,
\begin{align*}
X^{n+1}(t) & = x + \int_0^t \mu(s,X^n_s,X^n(s)) ds +\int_0^t \sigma(s,X^n_s,X^n(s)) d W(s) \\ 
 & + \int_0^t \int_{\RR_0} \gamma(s,X^n_s,X^n(s),z) \tilde{N}(ds,dz)\, ,\\
X^{n+1}_0 &= \eta\, ,
\end{align*}
where we use the notation $X^n_s := \left (X^n(s+\theta)\right )_{\theta \in [-r,0]}$.

We are going to prove by induction over $n$ that $X^n(t) \in \mathbb{D}^{1,2}$ for any $t \in [0,T]$, 
$D_{s,z} X(t)$ is a predictable process and that
\[
\xi_{n+1}(t) \leq C_1 + C_2 \int_{-r}^t \xi_n(s) ds\, ,
\]
where $C_1, \,C_2$ are some suitable constants and
\[
\xi_{n}(s) := \sup_{0 \leq s \leq t}\mathbb{E} \int_{\RR_0} \sup_{s \leq \tau \leq t} 
|D_{s,z} X^n(\tau)|^2 \nu(dz)< \infty\, .
\]

For $n= 0$ the above claim is trivially satisfied. Let us thus assume that the previous assumptions 
hold for $n$, we have to show that they hold also for $n+1$. 
Indeed we have that $\int_0^t \mu(s,X^n_s,X^n(s)) ds$, $\int_0^t \sigma(s,X^n_s,X^n(s)) dW(s)$ 
and $\int_0^t \gamma(s,X^n_s,X^n(s),z) \tilde{N}(ds,dz) \in \mathbb{D}^{1,2}$, 
and proposition \ref{PRO:MallInt} guarantees that 
\begin{align*}
D_{s,z} \int_0^t \mu(\tau,X^n_\tau,X^n(\tau)) d\tau &  =  \int_s^t D_{\tau,z} \mu(\tau,X^n_\tau,X^n(\tau)) d\tau \\ 
D_{s,z} \int_0^t \sigma(\tau,X^n_\tau,X^n(\tau)) dW(\tau) & =  \int_s^t D_{\tau,z} \sigma(\tau,X^n_\tau,X^n(\tau)) dW(\tau)
\end{align*}
and
\begin{align*}
D_{s,z} & \int_0^t \gamma(\tau,X^n_\tau,X^n(\tau),z) \tilde{N}(d\tau,dz) = \gamma(s,X^n_s,X^n(s),z) \\ 
& + \int_s^t \int_{\RR_0} D_{\tau,\zeta} \gamma(\tau,X^\tau_s,X^n(\tau),\zeta) \tilde{N}(d\tau,d \zeta) 
\end{align*}
for $s \leq t.$
Consequently, for any $t \in [0,T],$ $X^{n+1}(t) \in \mathbb{D}^{1,2}$ and
\begin{align}
\nonumber 
D_{s,z} X^{n+1}(t) & = \gamma(s,X^n_s,X^n(s),z) +\int_s^t D_{\tau,z}\mu(\tau,X^n_\tau,X^n(\tau)) d\tau \\ \nonumber 
& +\int_s^t D_{\tau,z} \sigma (\tau,X^n_\tau,X^n(\tau)) dW(\tau) \\ \label{EQN:eQN1}
& + \int_s^t \int_{\RR_0} D_{\tau,z} \gamma (\tau,X^\tau_\tau,X^n(\tau),\zeta) \tilde{N}(d\tau,d \zeta)\, ,
\end{align}
and the representation in equation \eqref{EQN:MallDiffSFDDE1} immediately follows from the chain rule Th. \ref{THM:ChainMallF}.

By squaring both sides of equation \eqref{EQN:eQN1}, we have
\begin{align}
\nonumber
\left |D_{s,z} X^{n+1}(t)\right |^2 & \leq  4\left | \gamma(s,X^n_s,X^n(s),z)\right 
|^2 +\left |\int_s^t \mu_{\tau,z}(\tau,X^n_\tau,X^n(\tau)) d\tau \right |^2 \\ \nonumber
& +4 \left |\int_s^t \sigma_{\tau,z}(\tau,X^n_\tau,X^n(\tau)) dW(\tau)\right |^2 \\ \label{EQN:MallPJum}
& + 4 \left | \int_s^t \int_{\RR_0} \gamma_{\tau,z}(\tau,X^\tau_s,X^n(\tau),\zeta) 
\tilde{N}(d\tau,d \zeta)\right |^2\, .
\end{align}

By exploiting Doob maximal inequality, stochastic Fubini's theorem and It\^{o} isometry, 
we get

\begin{equation}\label{EQN:MallPJum2}
\begin{split}
\mathbb{E}\int_{\RR_0} & \sup_{s \leq \tau \leq t}\left |D_{s,z} X^{n+1}(t)\right |^2 \nu(dz) 
\leq  C\left [\mathbb{E}\int_{\RR_0} \left | \gamma(s,X^n_s,X^n(s),z)\right |^2 \nu(dz)\right . \\
&+\mathbb{E}\left |\int_s^t \mu_{\tau,z}(\tau,X^n_\tau,X^n(\tau)) d\tau \right |^2 
+\mathbb{E}\left |\int_s^t \sigma_{\tau,z}(\tau,X^n_\tau,X^n(\tau)) dW(\tau)\right |^2\\
&+\left .\mathbb{E}\left | \int_s^t \int_{\RR_0} \gamma_{\tau,z}(\tau,X^\tau_s,X^n(\tau),\zeta) 
\tilde{N}(d\tau,d \zeta)\right |^2\right ]\ \\
&\leq  C\left [\mathbb{E}\int_{\RR_0} \left | \gamma(s,X^n_s,X^n(s),z)\right |^2 \nu(dz) \right. \\
& + \left. \mathbb{E}\int_s^t \left | \mu_{\tau,z}(\tau,X^n_\tau,X^n(\tau))\right |^2 d\tau 
+ \mathbb{E}\int_s^t \left | \sigma_{\tau,z}(\tau,X^n_\tau,X^n(\tau)) \right |^2 d \tau \right. \\ 
& + \left .\mathbb{E} \int_s^t \int_{\RR_0} \left | \gamma_{\tau,z}(\tau,X^\tau_s,X^n(\tau),\zeta) 
\right |^2 \nu(dz) d\tau\right ]\, ,
\end{split}
\end{equation}

where we denote for short by $C>0$ a suitable constant.

Exploiting Assumptions \ref{ass1} together with Theorem \ref{THM:ChainMall2}, we get 

\begin{equation}\label{EQN:StimFinaMl}
\begin{split}
&\mathbb{E}\int_{\RR_0} \sup_{s \leq \tau \leq t} |D_{s,z} X^{n+1}(\tau)|^2 \nu(dz) \leq \\
&\leq C_1 \int_{s}^t \mathbb{E} \int_{\RR_0} |\left (D_{s,z} X^n_{\tau},D_{s,z} X^n(\tau)\right ) |^2_2 \nu(dz) d\tau + C_2\left (1+ \mathbb{E}  |\left ( X_{\tau}^{n}, X^{n}(\tau)\right ) |^2_2\right ) \\ 
& \leq C_1 \left (\mathbb{E} \int_{s}^t \int_{-r}^0 \int_{\RR_0} \left |D_{s,z} X^n(\tau+\theta)\right |^2 \nu(dz) d \theta d\tau  
+ \mathbb{E} \int_{s}^t \int_{\RR_0} \left |D_{s,z} X^n(\tau)\right |^2 d \nu(dz) \tau\right ) +\\
&+C_3 (1+ \lambda) \\
& \leq C_1 \left (\mathbb{E} \int_{-r}^0 \int_{s}^{t+\theta} \int_{\RR_0} \left |D_{s,z} X^n(p)\right |^2 \nu(dz) dp d\theta 
+ \mathbb{E} \int_{s}^t \int_{\RR_0} \left |D_{s,z} X^n(\tau)\right |^2 \nu(dz) d\tau\right ) +\\
&+C_3 (1+ \lambda) \\
&\leq C_4 \mathbb{E} \int_{s}^t \int_{\RR_0} \left |D_{s,z} X^n(\tau)\right |^2 \nu(dz) d\tau +C_3 (1+ \lambda) \, ,
\end{split}
\end{equation}

where $C_1, \, C_2$, $C_3$ and $C_4$ denote some suitable constants and $\lambda$ is such that
\[
\lambda = \sup _{n} \mathbb{E} \sup_{-r \leq s \leq T} |X^n(s)|^2_2 < \infty\, .
\]

Also, we obtain 
\[
X^{n+1} = \left (X^{n+1}(t)\right )_{t \in [-r,T]} \in L^2(\Omega \times [-r,T])\, ,
\]
and for any $t$, $X^{n+1}(t) \in \DM$, so that $X^{n+1} \in L^2(\Omega \times [-r,T];\DM)$ 
and, for $p \leq s, \; D_{s,z} X^{n+1}(p) = 0$. 

It follows that, for any $z \in \RR_0$, it exists a measurable version of the two-parameter process
\[
D_{s,z} X^{n+1}_t = \left \{D_{s,z} X^{n+1}_t(\theta) \, : \, s \in [0,T] \, , \theta \in [-r,0] \right  \}\, ,
\]
such that $D_{s,z} X^{n+1}_t \in L^2(\Omega \times [0,T] \times [-r,0])$, see, e.g. \cite[Sec. 4]{Moh1}.

Therefore, the inductive hypothesis is fulfilled by $X^{n+1}$ and 
\[
\mathbb{E} \sup_{s \leq T} |X^n(s) - X(s)|^2 \to 0 \, \quad \mbox{ as } \,\, n \to \infty\,.
\]

Finally, thanks to a discrete version of Gronwall's lemma, see, e.g. \cite[Lemma 4.1]{Bel} or \cite[Th. 17.2]{mall}, 
and applying equation \eqref{EQN:StimFinaMl}, we have 
\[
\sup_{n \geq 0}\mathbb{E} \int_{-r}^T |D_{s,z} X^n(\tau)|^2 d \tau < \infty\, ,
\]
so that $X(t) \in \mathbb{D}^{1,2}$.

By repeating the same reasoning as before, we have 
$$X = \left (X(t)\right )_{t \in [-r,T]} \,\in\, L^2(\Omega \times [-r,T]), \; X(t) \,\in\, \DM \;,$$ 
for any $t,$ so that $X \in L^2(\Omega \times [-r,T];\DM)$. 
The proof is complete by observing that, for any $z \in \RR_0$, there exists a measurable version of 
the two-parameter process
\[
D_{s,z} X_t = \left \{D_{s,z} X_t(\theta) \, : \, s \in [0,T] \, , \theta \in [-r,0] \right  \}\, ,
\]
such that $D_{s,z} X_t \in L^2(\Omega \times [0,T] \times [-r,0])$.
\end{proof}

\section{Joint quadratic variation}\label{jointqv}

In order to prove the main result of this work, which consists in giving an explicit \textit{Feynman-Kac} 
representation formula for a coupled forward-backward system with delay, we need first to prove a joint quadratic variation result. 
The main advantage of such an approach is to overcome difficulties that may arise in dealing with the It\^{o} 
formula in infinite dimension, since, in general, the process $X_t$ fails to be a semi-martingale, so we cannot 
rely on standard It\^{o} calculus. Furthermore, with the present approach, we are able to relax hypothesis concerning the differentiability 
of the coefficients.

Following \cite{FMT,FT1}, we introduce a \textit{generalized covariation} process. The definition of joint generalized quadratic variation we consider in the present paper has been first introduced in \cite{Rus2}, see also \cite{Rus,Rus3}, with the only difference that they consider the limit to hold uniformly on compacts sets in probability. We have chosen here, following \cite{FMT,FT1}, to consider the limit in probability because the limiting procedure is easier with a stronger notion of convergence, such as the convergence in probability. Also, it is shown in \cite[Prop. 1.1]{Rus2} that the standard definition of joint quadratic variation, see, e.g. \cite[Section 4.4.3]{App}, coincides with the quadratic variation defined below. 

\begin{defn}\label{DEF:JointQV}
Given a couple of $\real$-valued stochastic processes $(X(t),Y(t)),$ $t \geq 0,$ we define their \emph{joint quadratic 
variation} on $[0,T]$, to be
\[
\left\langle X(t),Y(t) \right\rangle_{[0,T']} := \mathbb{P}-\lim_{\eps \downarrow 0} C^{\eps}_{[0,T']}(X(t),Y(t))\;, 
\]
where $\mathbb{P}-\lim$ denotes the limit to be taken in probability and
\begin{equation}\label{joint}
C^{\eps}_{[0,T']}(X(t),Y(t)) := \frac{1}{\eps}\int_0^{T'} (X(t+\epsilon) - X(t))(Y(t+\epsilon) - Y(t)) \de t, \; \eps > 0\;,
\end{equation}
with $0\leq  T' + \epsilon < T$.
\end{defn}

Before stating our main result we are to better introduce a mild notion of derivative we will use throughout the paper. In what follows we will consider a function $u : [0,T] \times \DDD \to \RR,$ such that there exist $C >0$ and $m \geq 0$, such that, for any $t \in [0,T]$ and any $(\eta_1,x_1)$, $(\eta_2,x_2) \in \DDD$, 
$u$ satisfies

\begin{equation}
\begin{split}
&|u(t,\eta_1,x_1) - u(t,\eta_2,x_2)|\leq C|(\eta_1,x_1)-(\eta_2,x_2)|_2(1+ |(\eta_1,x_1)|_2 + |(\eta_2,x_2)|_2 )^m\, ,\\
&|u(t,0,0)| \leq C \, .
\end{split}
\end{equation}

that is we require the function $u$ to be Lipschitz continuous without requiring any further regularity concerning differentiability. Nevertheless, in what follows, we will use the notation of $\partial_x^\sigma$. In particular following \cite{FT1} we will introduce a mild notion of derivative, called \textit{generalized directional gradient} $\partial_x^\sigma u$. When $u$ is sufficiently regular, it can be shown that the \textit{generalized directional gradient}, in the direction $\sigma(t,\eta,x)$, of a function $u$, coincides with $\partial_x u(t,\eta,x) \sigma(t,\eta,x)$. 

The definition, as well as the characterization of several properties, for the \textit{generalized directional gradient} has been provided in \cite{FT1}. We will only state here the definition of \textit{generalized directional gradient}, whereas we refer to \cite{FT1} to a complete treatment of the topic.

In particular it has been shown in \cite{FT1} that the following holds 
\begin{equation}\label{EQN:GenGra}
\left \langle u(\cdot, X_{\cdot}, X(\cdot),W(\cdot) \right \rangle_{\tau,t} = \int_\tau^t \zeta(s,X_s,X(s)) ds \;,
\end{equation}
where $\left \langle \, \cdot\,,\, \cdot \right \rangle_{\tau,t}$ denotes the \textit{joint quadratic variation} defined above and $\zeta:[0,T] \times \DDD \to \RR$ is a suitable measurable map, see also \cite{FMT,FT,FT1} for details. 
Under suitable hypothesis of regularity, in \cite{FT1} the authors show that 
\begin{equation}\label{EQN:GeGPS}
\left \langle u(\cdot, X_{\cdot}, X(\cdot),W(\cdot) \right \rangle_{\tau,t} = \int_\tau^t \partial_x u(t,X_s,X(s)) \sigma(t,X_s,X(s)) ds \, ,\quad \mathbb{P}-a.s.\, ,
\end{equation}
where we denote by $\partial_x$ the derivative w.r.t. the present state. Hence, 
equation \eqref{EQN:GenGra} can be considered  as the definition of the \textit{generalized directional 
gradient} of the function $u$ along the direction $\sigma$. We say that the map $\zeta:[0,T] 
\times \DDD \to \RR$ belongs to the directional gradient of $u$, or equivalently that $\zeta \in 
\partial_x^\sigma u$, if equation \eqref{EQN:GenGra} holds. 
Therefore, we use for short the notation $\partial_x^\sigma u$ to represent an element of the \textit{generalized 
directional gradient.} Since this topic lies outside our goals, having been deeply studied in a more general 
setting in \cite{FT1}, we skip every technicality and invite the interested reader to \cite{FT1}. 

The following result represents the core of this paper. 

\begin{thm}\label{thm3.1}
Let us assume that $u : [0,T] \times \DDD \to \RR$ is locally Lipschitz w.r.t. the second variable and with 
at most polynomial growth, namely, there exist $C >0$ and $m \geq 0$, such that, for any $t \in [0,T]$ and 
any $(\eta_1,x_1)$, $(\eta_2,x_2) \in \DDD$, $u$ satisfies

\begin{equation}\label{EQN:KFTTh}
\begin{split}
&|u(t,\eta_1,x_1) - u(t,\eta_2,x_2)|\leq C|(\eta_1,x_1)-(\eta_2,x_2)|_2(1+ |(\eta_1,x_1)|_2 + |(\eta_2,x_2)|_2 )^m\, ,\\
&|u(t,0,0)| \leq C \, .
\end{split}
\end{equation}

Then, for every $(\EXZ) \,\in\, \DDD$ and $0 \leq \tau \leq  T' \leq T,$ the process
\[
\{u(t,X_t^{(\tau,\EXZ)},\XTE(t)), \, t \,\in\, [\tau,T']\}
\]
admits a joint quadratic variation on the interval $[\tau,T']$ with 
$$J(t):= \int_0^t \int_{\RR_0} z \comp(ds,dz),$$ 
given by

\begin{equation}\label{EQN:JV2}
\begin{split} 
&\left\langle u(\cdot, X_{\cdot}^{\tau,\EXZ}, \XTE (\cdot)), J(\cdot) \right\rangle_{[\tau,T']} =\\ =\int_\tau^{T'} &\int_{\real_0} z \left[u(s, X_s^{\tau,\EXZ},\XTE (s)+ \gamma(s,\XTE_s,\XTE(s),z) - u(s, X_s^{\tau,\EXZ},\XTE(s)) \right] N(\de s, \de z)\, .
\end{split}
\end{equation}

\end{thm}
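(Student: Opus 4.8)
The plan is to work directly from the definition \eqref{joint} of the regularized covariation $C^\eps_{[\tau,T']}$, because the process $\Phi(t) := u(t,X_t,X(t))$ (suppressing, as elsewhere, the initial datum $(\tau,\eta,x)$) is \emph{not} a semimartingale — the segment $t\mapsto X_t$ fails to be one — so no It\^o-type expansion is available. The first step is to record the path structure of $\Phi$. Since $t\mapsto X_t$ is continuous in $L^2$ (shifting the observation window by $\eps$ changes the segment by an $L^2$-amount vanishing with $\eps$, a single instant carrying no $L^2$-mass), the only discontinuities of $\Phi$ are inherited from the \cadlag real component $X(\cdot)$. Using the continuity of $u$ in time together with the local Lipschitz bound \eqref{EQN:KFTTh}, the process $\Phi$ is therefore \cadlag, and at a jump time $s$ (necessarily an atom $(s,z)$ of $N$) its jump is
\[
\Delta\Phi(s) = u(s,X_{s^-},X(s^-)+\gamma(s,X_{s^-},X(s^-),z)) - u(s,X_{s^-},X(s^-)),
\]
while $\Delta J(s)=z$. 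The goal is then to prove $C^\eps_{[\tau,T']}(\Phi,J)\xrightarrow[\eps\downarrow0]{\prob}\sum_{\tau<s\le T'}\Delta\Phi(s)\,\Delta J(s)$ and to identify the sum with the $N(\de s,\de z)$-integral in \eqref{EQN:JV2}.

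To perform the limit I would split the driving noise into large and small jumps, $J=J^\delta+J_\delta$, with $J^\delta(t)=\int_0^t\int_{|z|>\delta}z\,\comp(\de s,\de z)$ having finitely many jumps on $[\tau,T']$ and $J_\delta(t)=\int_0^t\int_{|z|\le\delta}z\,\comp(\de s,\de z)$, and treat $C^\eps(\Phi,J^\delta)$ and $C^\eps(\Phi,J_\delta)$ separately. For $J^\delta$, fix $\eps$ smaller than the minimal gap between its finitely many jump times $s_1,\dots,s_K$; on the complement of $\bigcup_k[s_k-\eps,s_k)$ the increment $J^\delta(t+\eps)-J^\delta(t)$ reduces to the continuous, finite-variation compensator, whose contribution is $O(\eps)$ and disappears in the limit, while on each $[s_k-\eps,s_k)$ the \cadlag property yields $\sup_{t\in[s_k-\eps,s_k)}|\Phi(t+\eps)-\Phi(t)-\Delta\Phi(s_k)|\to0$. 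Hence $\tfrac1\eps\int_{s_k-\eps}^{s_k}(\Phi(t+\eps)-\Phi(t))z_k\,\de t\to\Delta\Phi(s_k)z_k$, and summing gives $C^\eps(\Phi,J^\delta)\to\sum_{|\Delta J(s)|>\delta}\Delta\Phi(s)\,\Delta J(s)$ as $\eps\downarrow0$. This part is purely pathwise and uses only that $\Phi$ is \cadlag.

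The heart of the matter is the uniform-in-$\eps$ control of the small-jump term, which legitimises exchanging the limits in $\eps$ and $\delta$. By the It\^o isometry, $\media|J_\delta(t+\eps)-J_\delta(t)|^2=\eps\,c_\delta$ with $c_\delta:=\int_{|z|\le\delta}|z|^2\,\nu(\de z)\to0$ as $\delta\downarrow0$ by \eqref{EQN:ConNu2}. Separately, decomposing $\Phi(t+\eps)-\Phi(t)$ into a time increment of $u$ at a frozen spatial argument plus a spatial increment at frozen time, the bound \eqref{EQN:KFTTh} (its polynomial growth tamed by the moment estimate \eqref{stima1} of Theorem \ref{eu_f}), the $L^2$-continuity of the segment, and the standard estimate $\media|X(t+\eps)-X(t)|^2\le C\eps$ for \eqref{EQN:DelayGenerale} yield $\media|\Phi(t+\eps)-\Phi(t)|^2\le C\eps$. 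Cauchy--Schwarz then gives
\[
\media\,\bigl|C^\eps_{[\tau,T']}(\Phi,J_\delta)\bigr|\le\frac1\eps\int_\tau^{T'}\bigl(\media|\Phi(t+\eps)-\Phi(t)|^2\bigr)^{1/2}\bigl(\eps c_\delta\bigr)^{1/2}\,\de t\le(T'-\tau)\sqrt{C\,c_\delta},
\]
independent of $\eps$ and vanishing as $\delta\downarrow0$.

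Finally I would combine the pieces by a three-$\eps$ argument: given $\eta>0$, choose $\delta$ so that both the small-jump bound above and the tail $\sum_{|\Delta J(s)|\le\delta}|\Delta\Phi(s)\,\Delta J(s)|$ (finite and small by \eqref{EQN:KFTTh} and \eqref{EQN:ConNu2}) are below $\eta/3$ uniformly in $\eps$, then send $\eps\downarrow0$ in the large-jump term. This produces convergence in probability of $C^\eps(\Phi,J)$ to $\sum_{\tau<s\le T'}\Delta\Phi(s)\,\Delta J(s)$; rewriting this sum over the atoms of $N$ as an integral against $N(\de s,\de z)$, and using $X_s=X_{s^-}$ in $L^2$, gives exactly \eqref{EQN:JV2}. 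The main obstacle is precisely the small-jump estimate of the previous paragraph: because $\Phi$ is not a semimartingale one cannot invoke any orthogonality of continuous and jump martingale parts, so the $O(\eps)$-in-mean control of the increments of $\Phi$ — uniform in $t$ and robust against the accumulation of infinitely many small jumps — has to be extracted by hand from \eqref{EQN:KFTTh}, \eqref{stima1} and \eqref{EQN:ConNu2}.
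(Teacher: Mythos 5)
Your strategy is genuinely different from the paper's: you propose a Russo--Vallois-type pathwise argument (split $J$ into finitely many large jumps plus a small-jump martingale $J_\delta$, identify the large-jump contribution pathwise, and control the small-jump contribution uniformly in $\eps$), whereas the paper runs entirely through Malliavin calculus — it rewrites the product increment $v_t(J_{t+\eps}-J_t)$ via the Skorokhod-integral identity \eqref{EQN:1}, identifies the limit of the resulting $N(\de s,\de z)$-trace term using the chain rule of Theorem \ref{THM:ChainMallF} and the adaptedness relations \eqref{EQN:AdMall}, and disposes of the remaining Skorokhod term by a qualitative $\mathbb{L}^{1,2}$ continuity-of-translations argument. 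Unfortunately your route breaks exactly at the step you yourself call the heart of the matter: the estimate $\media|\Phi(t+\eps)-\Phi(t)|^2\leq C\eps$ does not follow from the hypotheses, and is in fact false in general.

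There are two independent reasons. First, the ``time increment of $u$ at a frozen spatial argument'', $u(t+\eps,X_t,X(t))-u(t,X_t,X(t))$, is completely uncontrolled: \eqref{EQN:KFTTh} gives only a spatial Lipschitz bound uniform in $t$ together with $|u(t,0,0)|\leq C$; no modulus of continuity in $t$ is assumed, so this term cannot be bounded by anything vanishing with $\eps$, let alone by $C\eps$. Second, and more structurally, the segment increment carries no rate near the initial time: for $t\in[\tau,\tau+r)$ the window $[t-r,t+\eps]$ still overlaps the initial datum, so $\media\|X_{t+\eps}-X_t\|_{L^2}^2$ is bounded below by an $L^2$-translation modulus of $\eta$, and $\eta$ is merely an element of $L^2([-r,0])$ — translations converge in $L^2$ with \emph{no} rate (take $\mu=\sigma=\gamma=0$ and $u(t,\eta,x)=\langle \eta,g\rangle_{L^2}$ with $\eta$, $g$ highly oscillatory to see the claimed bound fail outright). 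With only $\media|\Phi(t+\eps)-\Phi(t)|^2\leq C(\eps+\omega(\eps))$ available, your Cauchy--Schwarz bound becomes of order $\sqrt{c_\delta}\sqrt{1+\omega(\eps)/\eps}$, which is not uniform in $\eps$, so the interchange of the limits $\eps\downarrow 0$ and $\delta\downarrow 0$ — the whole point of the three-$\eps$ argument — is unjustified. (A further, smaller, defect: even on $[\tau+r,T']$, the polynomial weight in \eqref{EQN:KFTTh} forces a H\"older step, and for jump SDEs one only has $\media|X(t+\eps)-X(t)|^{2q}=O(\eps)$, not $O(\eps^{q})$, so the rate degrades below the needed $O(\eps)$.) This is precisely the difficulty the paper's Malliavin route is designed to avoid: there the dangerous term is a bounded operator (the Skorokhod integral) applied to a quantity shown to tend to zero in $\mathbb{L}^{1,2}$ by a soft continuity-of-translations argument requiring no rate whatsoever, while the jump bracket emerges from the chain rule and adaptedness of the Malliavin derivative rather than from pathwise increment asymptotics.
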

\begin{rem}
An analogous of \cite[Prop. 4.4]{FT1} is valid in the present case, that is the following representation holds
\[
\left\langle u(\cdot, X_{\cdot}^{\tau,\EXZ}, X^{\tau,\EXZ}(\cdot)), W(\cdot) \right\rangle_{[\tau,T']} 
= \int_\tau^{T'} \partial_x^\sigma u(s, X_s^{\tau,\EXZ},\XTE(s) )ds\, ,
\]
where $\partial_x^\sigma u$ is the \textit{generalized directional gradient}. The claim follows from \cite{FT1} 
by observing that the Poisson random measure does not affect the result and the proof follows exactly 
the same steps as in \cite{FT1}.
\end{rem}
\begin{proof}
Without loss of generality, we prove the result for $\tau = 0$, as the case of a general 
initial time $ \tau \not=0 $ can be proved using the same techniques. 
Fix $(\eta,x) \,\in\, \DDD$ and a time horizon $T' \,\in\, [0,T]$ and denote for brevity 
$X^{0,\eta,x}$ by $X$. In what follows we will denote with $\tilde{N}(\hat{dt},dz)$ the Skorokhod integral.

In order to shorten the notation set
\[
v_t := (u(t+\epsilon,X_{t+\eps},X(t+\epsilon)) - u(t,X_t,X(t))\Ind{0,T'}(t)\, ,
\]
and 
\[
A^\eps := \{ (t,s)\in [0,T']\times [0,T']\, : \, 0 \leq t \leq T'\, , \, \, t \leq s \leq t+\eps\}\, .
\]

From equation \eqref{EQN:KFTTh} and theorem \ref{THM:MallDiff}, we have $v_t \in \mathbb{L}^{1,2},$ 
so that, for any $ t $, $ v_t \in \DD^{1,2} $ and then $ v_t \IndN{A^\eps}(t,\cdot) \in L^2(\Omega \times [0,T]).$ 
Furthermore, equation \eqref{EQN:DeltSk} implies that $v_t$ is Skorokhod integrable and from \cite[Th. 12.11]{mall} we have
\begin{align}
\nonumber
\int_0^{T'} \int_{\RR_0} z v_t \IndN{A^\eps}(t,s)\comp(\hat{dt},dz) & = v_t \int_0^{T'} \int_{\RR_0} 
z \IndN{A^\eps}(t,s)\comp(\hat{dt},dz) \\ \label{EQN:1}
& -  \int_0^{T'} \int_{\RR_0} z D_{s,z} v_t \IndN{A^\eps}(t,s)N(ds,dz) =: z_t\, ,
\end{align}
which holds since $z \in L^2(\Omega \times [0,T])$. Also, equation \eqref{EQN:1} implies, for a.a. $ t \in [0,T'],$
\begin{equation}\label{EQN:2}
\begin{split}
& u(t+\eps,X_{t+\eps},X(t+\epsilon)) -u(t,X_t,X(t)) (J_{t+\eps}-J_t) \\
&= u(t+\eps,X_{t+\eps},X(t+\epsilon))-u(t,X_t,X(t)) \int_t^{t+\eps} \int_{\RR_0} z \comp(ds,dz) \\
& = \int_t^{t+\eps} \int_{\RR_0} z D_{s,z}\left (u(t+\eps,X_{t+\eps},X(t+\epsilon)) -u(t,X_t,X(t)) \right ) N(ds,dz) \\
&+ \int_t^{t+\eps} \int_{\RR_0} z\left (u(t+\eps,X_{t+\eps},X(t+\epsilon)) -u(t,X_t,X(t)) \right ) \comp(ds,dz)\, .
\end{split}
\end{equation}

Let us integrate the right-hand side of equation \eqref{EQN:2} in $[0,T']$ w.r.t. $t.$ 
By noticing that the left-hand side equals to $\epsilon C^\epsilon$, we write the right-hand side 
as follows
\begin{equation}\label{EQN:3}
\begin{split}
& \int_0^{T'} \int_t^{t+\eps} \int_{\RR_0} z D_{s,z}\left (u(t+\eps,X_{t+\eps},X(t+\epsilon)) 
-u(t,X_t,X(t)) \right ) N(ds,dz) dt \\
&+\int_0^{T'} \int_t^{t+\eps} \int_{\RR_0} z \left (u(t+\eps,X_{t+\eps},X(t+\epsilon)) 
-u(t,X_t,X(t)) \right ) \comp(\hat{ds},dz) dt \\
&= \int_0^{T'} \int_t^{t+\eps} \int_{\RR_0} z D_{s,z}\left (u(t+\eps,X_{t+\eps},X(t+\epsilon)) 
-u(t,X_t,X(t)) \right ) N(ds,dz) dt \\
&+ \int_0^{T'+\eps}\int_{\RR_0}  \int_{(s-\eps)^+}^{s \wedge T'}  z \left (u(t+\eps,X_{t+\eps},X(t+\epsilon)) 
-u(t,X_t,X(t)) \right )dt  \comp(\hat{ds},dz) \, . 
\end{split}
\end{equation}

It remains to verify that $ \int_0^{T'} z v_t \IndN{A^\eps}(t,\cdot)dt$ appearing in equation \eqref{EQN:3} 
is Skorokhod integrable. From the definition of Skorokhod integral, by using equation \eqref{EQN:1} for 
$ G \in \DD^{1,2} $ and the \emph{duality formula}, see e.g. \cite[equation (12.14)]{mall}, we have 
\[
\begin{split}
& \E{\int_0^{T} \int_{\RR_0} \int_0^{T} z v_t \IndN{A^\eps}(t,s) dt D_{s,z} G \nu(dz)ds} \\
&= \int_0^{T} \E{ \int_0^{T} \int_{\RR_0} z v_t \IndN{A^\eps}(t,s) D_{s,z} G \nu(dz)ds}dt \\
& = \int_0^{T} \E{ G  \int_0^{T}\int_{\RR_0} z v_t \IndN{A^\eps}(t,s) D_{s,z} 	\comp(\hat{ds},dz) }dt 
= \E{G\int_0^{T} z_t dt}\, ,
\end{split}
\]

so that $ \int_0^{T'} v_t \IndN{A^\eps}(t,\cdot)dt$ is Skorokhod integrable. Hence, 
\begin{align*}
\int_0^{T} & \int_0^{T} \int_{\RR_0} z v_t \IndN{A^\eps}(t,s) dt \comp (\hat{ds},dz) \\ 
 & = \int_0^{T}z_t dt = \int_0^{T}\int_0^{T} \int_{\RR_0} z v_t \IndN{A^\eps}(t,s) \comp (\hat{ds},dz) dt \, .
\end{align*}

Exploiting again equation \eqref{EQN:1} we have
\begin{align*}
\int_0^{T}\int_0^{T'} & \int_{\RR_0} z v_t \IndN{A^\eps}(t,s) dt \comp (\hat{ds},dz) 
= \int_0^{T} z v_t (J_{t+\eps}-J_t) \Ind{t,T}(t) dt \\ 
& - \int_0^{T'} \int_0^{T}\int_{\RR_0} z D_{s,z}v_t \IndN{A^\eps}(t,s) N(ds,dz) dt\, ,
\end{align*}
and then equation \eqref{EQN:3} is proved.

On the other hand, thanks to the chain rule Theorem \ref{THM:ChainMallF} and from theorem \ref{THM:MallDiff} together with the adeptness property of the Malliavin derivative, i.e. $D_{s,z}X(t) = 0$ if $s >t$, we have that, for a.a. $ s \in [t,t+\eps] $,
\begin{align*}
D_{s,z} v_t &= D_{s,z}[u(t+\eps, X_{t+\eps},X(t+\epsilon))-u(t,X_t,X(t))] \\ 
& = D_{s,z}[u(t+\eps, X_{t+\eps},X(t+\epsilon))] \\
&= u(t+\eps, X_{t+\eps} + D_{s,z}X_{t+\eps},X(t+\epsilon) + D_{s,z} X(t+\epsilon)) \\ 
& - u(t+\eps , X_{t+\eps},X(t+\epsilon))\;. 
\end{align*}
Now, we apply equation \eqref{EQN:3} to get

\[ 
\begin{split}
& C^{\eps} =\frac{1}{\eps} \int_0^{T'}\int_t^{t + \eps} \int_{\real_0} z\left [ u\left (t+\eps, X_{t+\eps} + D_{s,z}X_{t+\eps},X(t+\epsilon) + D_{s,z} X(t+\epsilon)\right ) N(\sko s, \de z) \de t\right .\\
&-\frac{1}{\eps} \int_0^{T'}\int_t^{t + \eps} \left . u\left (t+\eps , X_{t+\eps},X(t+\epsilon)\right )\right ] N(\sko s, \de z) \de t  \\
& + \frac{1}{\eps} \int_0^{T'+\eps} \int_{\real_0}  \int_{(s-\eps)^+}^{s \wedge T'} z\left ( u(t+\eps, X_{t+\eps},X(t+\epsilon))- u(t,X_t,X(t)) \right ) dt \comp(\sko s, \de z)\, .
\end{split} 
\]

Let us consider separately the two terms

\[
\begin{split}
I_1^{\eps} &:= \frac{1}{\eps} \int_0^{T'}\int_t^{t + \eps} \int_{\real_0} z\left [ u\left (t+\eps, X_{t+\eps} + D_{s,z}X_{t+\eps},X(t+\epsilon) + D_{s,z} X(t+\epsilon)\right ) N(\sko s, \de z) \de t\right .\\
&-\frac{1}{\eps} \int_0^{T'}\int_t^{t + \eps} \left . u\left (t+\eps , X_{t+\eps},X(t+\epsilon)\right )\right ] N(\sko s, \de z) \de t \, ,\\
I_2^\epsilon &:= \frac{1}{\eps} \int_0^{T'+\eps} \int_{\real_0}  \int_{(s-\eps)^+}^{s \wedge T'} z\left ( u(t+\eps, X_{t+\eps},X(t+\epsilon))- u(t,X_t,X(t)) \right ) dt \comp(\sko s, \de z) \, .
\end{split}
\]

As regards $I_2^{\eps}$, the proof proceed  as in \cite[Prop. 4.4.]{FT1}, see also 
\cite[Th. 3.1]{FMT}. We report in what follows its main steps for the sake of completeness.
We have to show that
\[
\frac{1}{\eps} \int_0^{T'}  v_t \ind_{A^{\eps}}(t,s) \de t \to 0\, ,
\]
in $\mathbb{L}^{1,2}$, since this implies $I_2^{\eps} \to 0$ in $L^2(\Omega),$ together with the boundedness of the Skorokhod 
integral. Thus, for a general $y \in \mathbb{L}^{1,2},$ we have
\[
T^\epsilon (y)_s = \frac{1}{\eps} \int_0^{T'} ( y_{t+\epsilon}-y_t) \ind_{A^{\eps}}(t,s)dt = \frac{1}{\eps} \int_{(s-\epsilon)\vee t}^{s\wedge T} ( y_{t+\epsilon}-y_t) dt\, ,
\]
so that we have to show that $T^\epsilon (y) \to 0 $ in $\mathbb{L}^{1,2}$. 

Let us recall the isomorphism
\[
L^2\left ([0,T]; \mathbb{D}^{1,2}(\RR)\right ) \simeq \mathbb{L}^{1,2}\, .
\]

Following \cite{FT1}, we have to prove that $\|T^\epsilon\|_{\mathbb{L}^{1,2}}(\RR)$ is 
bounded uniformly w.r.t. $\epsilon$. 
In fact, we have
\[
\begin{split}
\|T^\epsilon(y)_s\|^2_{\mathbb{D}^{1,2}(\RR)}  &\leq \frac{1}{\epsilon^2}\int_0^{T'}\ind_{A^{\eps}}(t,s)dt \int_0^{T'} |y_{t+\epsilon}-y_t|_{\mathbb{D}^{1,2}(\RR)}^2\ind_{A^{\eps}}(t,s)dt \\
&\leq \int_0^{T'} |y_{t+\epsilon}-y_t|_{\mathbb{D}^{1,2}(\RR)}^2\ind_{A^{\eps}}(t,s)dt \, ,\\
\|T^\epsilon(y)_s\|^2_{\mathbb{L}^{1,2}(\RR)}  &=  \int_0^{T'}\|T^\epsilon(y)_s\|^2_{\mathbb{D}^{1,2}(\RR)} ds \\ 
& \leq \int_0^{T'}  |y_{t+\epsilon}-y_t|_{\mathbb{D}^{1,2}(\RR)}^2 \int_0^{T'} \ind_{A^{\eps}}(t,s)ds \,dt  \\
&\leq  \int_0^{T'} |y_{t+\epsilon}-y_t|_{\mathbb{D}^{1,2}(\RR)}^2 dt \leq 2 \|y\|^2_{\mathbb{L}^{1,2}(\RR)}\, ,
\end{split}
\] 
and thus the claim follows by \cite[Prop. 4.4.]{FT1}, or \cite[Th. 3.1]{FMT}. 

As regards $ I_1^{\eps},$ we have

\begin{align*}
I_1^{\eps} & = \frac{1}{\eps}\int_{0}^{T} \int_t^{t+\eps} \int_{\real_0} z \, u(t+\epsilon,X_{t+\eps} 
+ D_{s,z}X_{t+\eps},X(t+\epsilon) + D_{s,z} X(t+\epsilon)) N(\sko s, \de z) \de t \\
& - \frac{1}{\eps}\int_{0}^{T} \int_t^{t+\eps} \int_{\real_0} z \, u(t+ \epsilon,X_{t+\eps},X(t+\epsilon))]  
N(\sko s, \de z) \de t := K_1^{\eps} - K_2^{\eps} \;.
\end{align*}

Let us first prove that
\begin{equation}\label{EQN:K2}
K_2^{\eps} \,\to  \int_0^{T'} \int_{\real_0} z u(t,X_t,X(t)) N(\sko t, \de z) \;,\quad  \mathbb{P}-a.s.
\end{equation}
as $\epsilon \to 0$.

From assumption \eqref{EQN:KFTTh} on the function $u$, the right-continuity of $X$, and exploiting  the Lebesgue differentiation theorem together with  the dominated convergence theorem, it follows that 
\begin{equation}\label{EQN:Convergenza1}
\begin{split}
&\frac{1}{\eps}\int_{0}^{T'} \int_t^{t+\eps} \int_{\real_0} z  u(t+\epsilon,X_{t+\eps},X(t+\epsilon)) N(\sko s, \de z) \de t \\
 &=\int_{0}^{T'+\epsilon} \int_{\RR_0} z \frac{1}{\epsilon} \int_{s \vee \epsilon}^{(s+\epsilon) \wedge T'} u(t,X_{t},X(t)) dt N(\sko s,dz) \\ 
& \to  \int_0^{T'} \int_{\real_0} z u(s,X_s,X(s))N(\sko s, \de z) \;,
\end{split}
\end{equation}
$\mathbb{P}-$a.s., as $\epsilon \to 0$.

Let us now prove that
\begin{equation}\label{EQN:FinalConv}
K_1^{\eps} \to  \int_0^{T'} \int_{\real_0} z u(t,X_t,X(t) + \gamma(t,X_t,X(t),z)) N(\sko t, \de z) \, .
\end{equation}

Theorem \ref{THM:MallDiff} assures that
\begin{equation}\label{EQN:Mall1}
\begin{split}
D_{s,z}X(t+\eps)  =&  \gamma(s,X_s,X(s),z) + \int_s^{t+\eps} D_{s,z}[\mu(q,X_q,X(q))] \de q \\ 
& + \int_s^{t+\eps} D_{s,z}[\sigma(q,X_q,X(q))] \de W(q)  \\
& + \int_s^{t+\eps} \int_{\real_0} D_{s,z}[\gamma(q,X_q,X(q),\zeta)] \comp(\de q, \de \zeta)\, . \\ 
\end{split}
\end{equation}

Proceeding as above, we get 

\begin{equation}\label{EQN:Convergenza16}
\begin{split}
&\frac{1}{\eps}\int_{0}^{T} \int_t^{t+\eps} \int_{\real_0} z  u(t+\epsilon,X_{t+\eps}+D_{s,z}X_{t+\epsilon}) 
N(\sko s, \de z) \de t \\
& = \int_{0}^{T'+\epsilon} \int_{\RR_0} z \frac{1}{\epsilon} \int_{(s-\epsilon)^+}^{s \wedge T'} 
u \left(t+\epsilon,X_{t+\eps}+D_{s,z}X_{t+\epsilon},X(t+\epsilon) + D_{s,z} X(t+\epsilon)\right) dt N(\sko s,dz)\, .
\end{split}
\end{equation}

The continuity of $u$, together with the right-continuity of $X$, and the Lebesgue differentiation theorem provide that 

\begin{equation}\label{EQN:FinalConv3}
\begin{split}
\int_{\epsilon}^{T'+\epsilon} \int_{\RR_0} z \frac{1}{\epsilon}\int_{s \vee \epsilon}^{(s+\epsilon) \wedge T'} & u(t,X_{t}+D_{s,z}X_{t},X(t)+D_{s,z}X(t)) dt N(\sko s,dz) \\
&\to \int_0^{T'} \int_{\real_0} z u(t,X_t + D_{t,z} X_t,X(t) + D_{t,z} X(t)) N(\sko t, \de z)\, ,
\end{split}
\end{equation}

$\mathbb{P}-$a.s. as $\epsilon \to 0$.

Moreover, theorem \ref{THM:MallDiff} implies that  
\[
\begin{split}
D_{s,z}X(t + \theta) & = \gamma(s,X_s,X(s),z) + \int_s^{t+\theta} D_{s,z}[\mu(q,X_q,X(q))] \de q \\ 
& + \int_s^{t+\theta} D_{s,z}[\sigma(q,X_q,X(q))] \de W(q)  \\
& + \int_s^{t+\theta} \int_{\real_0} D_{s,z}[\gamma(q,X_q,X(q),\zeta)] \comp(\de q, \de \zeta)\, , \quad \theta \in [-r,0]\, ,\\
D_{s,z}  X(t + \theta)&= 0 \, ,\quad s > t + \theta\, ,
\end{split}
\]
and exploiting the adaptedness of the Malliavin derivative, namely
\begin{equation}\label{EQN:AdMall}
\begin{split}
D_{t,z}X_{t}(\theta) &= D_{t,z} X(t+\theta)= 0\, ,\quad \mbox{ for }\theta \in [-r,0)\, ,\\
D_{t,z} X(t) &= \gamma(t,X_t,X(t),z)\, ,
\end{split}
\end{equation}
and substituting  \eqref{EQN:AdMall} into eq. \eqref{EQN:FinalConv3}, we obtain the claim and  \eqref{EQN:FinalConv} is proved.
%
Equation \eqref{EQN:JV2} thus follows and the proof is then complete.
\end{proof}

\section{Existence of mild solutions of Kolmogorov equation}\label{k_eq}

The main goal of this section is to prove an existence and uniqueness result of a \textit{mild} solution, 
in a sense to be specified later, of a non-linear path-dependent partial integro-differential equation. 
Such a solution is connected to a forward-backward system with delay of the form 

\begin{align}\label{FBsyst_tx}
\begin{cases}
    \de X^{\tau,\eta,x}(t) & = \mu(t, X^{\tau,\eta,x}_t,X^{\tau,\eta,x}(t)) \de t + \sigma(t, X^{\tau,\eta,x}_t,X^{\tau,\eta,x}(t)) \de W(s) \\
		 & \qquad  + \int_{\real_0} \gamma(t,X_t^{\tau,\eta,x},X^{\tau,\eta,x}(t),z) \comp(\de t, \de z) \\
		\smallskip
  	(X_\tau^{\tau,\eta,x},  X^{\tau,\eta,x}(\tau))  &= (\eta,x )\in \DDD \\
		\de Y^{\tau,\eta,x}(t) & = \psi\left (t, X^{\tau,\eta,x}_{t},X^{\tau,\eta,x}(t), Y^{\tau,\eta,x}(t) ,
		Z^{\tau,\eta,x}(t),\tilde{U}^{\tau,\eta,x}(t)\right ) \de t\\
		&\qquad + Z^{\tau,\eta,x}(t) \de W(t) +  \int_{\real_0}U^{\tau,\eta,x}(t,z) \comp(\de t, \de z) \\
		Y^{\tau,\eta,x}(T)  &= \phi(X_T^{\tau,\eta,x},X^{\tau,\eta,x}(T)) 
\end{cases}
\, ,
\end{align}

where we have set for short
\[
\tilde{U}^{\tau,\eta,x}(t):=\int_{\RR_0} U^{\tau,\eta,x}(t,z)\delta(z) \nu(dz)\, .
\]

In particular the solution to the forward--backward SFDDE \eqref{FBsyst_tx} is the quadruple $(X,Y,Z,U)$ taking values in $\DDD\times \RR \times \RR \times \RR.$ 
We refer to \cite{delong} for a detailed introduction to forward-backward system with jumps.

Let us assume the following assumptions to hold:
\begin{ass}\label{assFB} \end{ass}

\begin{itemize}
\item[(B1)] The map $\psi: \, [0,T] \times \DDD \times \real \times \real \times \real \, \to \, \real$ is continuous 
and there exists $K>0$ and $m \geq 0$ such that

\begin{align*}
|\psi(t, \EXU, y_1, z_1, u_1) & - \psi(t, \EXD, y_2, z_2, u_2)| \leq K |(\EXU) - (\EXD)|_2 \\ 
 & + K(|y_1 -y_2| + |z_1 -z_2| + |u_1 - u_2|) \;;\\
|\psi(t, \EXU, y, z, u) & - \psi(t, \EXD, y, z, u)|\\ 
 & \leq K (1+|(\EXU)|_2 + |(\EXD)|_2 + |y|)^m \\ 
 & \cdot (1+|z|+|u|)(|(\EXU) - (\EXD)|_2)\;;\\
|\psi(t, 0,0, 0, 0,0)| & \leq K\, , 
\end{align*}

for all $(t, \eta_1,x_1, y_1, z_1, u_1), \, (t, \eta_2,x_2, y_2, z_2, u_2) \,\in\, [0,T] \times \DDD \times \real^3$;
\item[(B2)] the map	$\phi: \, \DDD \, \to \, \real$ is measurable and there exist $K>0$ and $m \geq 0$ such that 
\[
|\phi(\eta_1,x_1)-\phi(\eta_2,x_2)| \leq K(1+|(\eta_1,x_1)|_2 + |(\eta_2,x_2)|_2)^m |(\eta_1,x_1)-(\eta_2,x_2)|_2\, ,
\]
for all $( \eta_1,x_1), \, (\eta_2,x_2) \,\in\, \DDD $;

\item[(B3)] there exists $K>0$ such that the function $\delta : \RR_0 \to \RR$ satisfies
\[
|\delta(z)|  \leq K|(1 \wedge |z|) \, , \quad \delta (z) \geq 0 \, , \quad z \in \RR_0\, .
\]
\end{itemize}

\begin{rem}
Following \cite{delong}, we have chosen this particular form for the generator $\psi$ of the backward component 
in equation \eqref{FBsyst_tx}, due to the fact that it results to be convenient in many concrete applications.
\end{rem}

\begin{rem}
We want to stress that assumptions \ref{assFB} imply that there exists a suitable constant $C>0$ such that
\begin{align*}
|\psi(t,\eta,x,y,z,u)| & \leq C (1+|(\eta,x)|_2^{m+1} +|y|+|z|+|u|) \, ,\\ 
|\phi(\eta,x)| & \leq C (1+ |(\eta,x)|_2^{m+1})\, .
\end{align*}
\end{rem}

In what follows we will denote by $\KK([0,T])$ the space of all triplet $(Y,Z,U)$ of predictable stochastic processes taking value in 
$\RR \times \real \times \real$ and such that
\begin{align}
\nonumber
\norma (Y,Z,U) \norma^2_{\KK} & := \media \left[\sup_{t \,\in\, [0,T]} |Y(t)|^2\right] + \media\left[\int_0^T 
|Z(t)|^2 \de \tau \right] \\ \label{normK}
& + \media \left[\int_0^T \int_{\real_0} |U(t,z)|^2 \nu(dz)dt \right] < \infty \;.
\end{align}
%
%
The following Proposition ensures the existence and the uniqueness of the solution to the system \eqref{FBsyst_tx}, under suitable properties of the coefficients.
\begin{prop}\label{thm4.2}
Let us consider the coupled forward-backward system \eqref{FBsyst_tx} which satisfies Assumptions \ref{ass1} 
and Assumptions \ref{assFB}. 

Then, the coupled forward-backward system admits a unique solution 
\[
(X^{\tau,\EXZ }, Y^{\tau,\EXZ }, Z^{\tau,\EXZ}, U^{\tau,\EXZ}) \,\in\, \Sp\times\KK([0,T])\, .
\] 

Eventually we have that the map
\[
(\tau,\eta,x) \mapsto (X^{\tau,\eta,x},Y^{\tau,\EXZ }, Z^{\tau,\EXZ }, U^{\tau,\EXZ })\, ,
\]
is continuous.
\end{prop}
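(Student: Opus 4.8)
The plan is to exploit the fact that the forward--backward system \eqref{FBsyst_tx} is \emph{decoupled}: the forward dynamics of $X^{\tau,\eta,x}$ does not involve the triplet $(Y,Z,U)$. Hence one first solves the forward SFDDE and then treats the backward equation as a standard BSDE with jumps driven by the now-fixed forward process. For the forward part, Theorem \ref{eu_f} already provides, for every $(\eta,x)\in\DDD$, a unique strong solution $X^{\tau,\eta,x}\in\Sp$ together with the moment bound \eqref{stima1} and the Lipschitz estimate \eqref{EQN:LipStima}. Since $\phi$ and $\psi$ grow polynomially of order $m+1$ in the space variable (the two growth bounds recorded in the remark following Assumptions \ref{assFB}), I would first note that \eqref{stima1}, applied with $p$ large enough, furnishes all the moments of $\sup_{t}\|(X^{\tau,\eta,x}_t,X^{\tau,\eta,x}(t))\|_{\DDD}$ needed below; in particular the choice $p=2(m+1)$ makes $\media\int_0^T|\psi(t,X^{\tau,\eta,x}_t,X^{\tau,\eta,x}(t),0,0,0)|^2\de t$ and $\media|\phi(X^{\tau,\eta,x}_T,X^{\tau,\eta,x}(T))|^2$ finite.

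Next I would verify that the generator is admissible for the BSDE theory. The crucial observation is that, by (B3) together with \eqref{EQN:ConNu1}, one has $|\delta(z)|^2\le K^2\min\{1,z^2\}$, so that $\delta\in L^2(\RR_0,\nu)$; consequently the map $U(t,\cdot)\mapsto\tilde U(t)=\int_{\RR_0}U(t,z)\delta(z)\nu(dz)$ is a bounded linear functional on $L^2(\RR_0,\nu)$, with $|\tilde U(t)|\le\|\delta\|_{L^2(\nu)}\,\|U(t,\cdot)\|_{L^2(\nu)}$. Combined with the Lipschitz bound in (B1), the reduced generator $(t,y,z,U)\mapsto\psi(t,X^{\tau,\eta,x}_t,X^{\tau,\eta,x}(t),y,z,\tilde U(t))$ is Lipschitz in $(y,z,U(\cdot))$ uniformly in $(\omega,t)$, the $L^2(\nu)$ norm controlling the dependence on $U$, while its value at the origin is square integrable by the previous paragraph. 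The terminal datum is square integrable by (B2) and \eqref{stima1}. These are exactly the hypotheses of the standard existence and uniqueness theorem for BSDEs with jumps, see \cite[Ch. 4]{delong}; applying it (via the usual fixed-point argument and a priori estimates) yields a unique triplet $(Y^{\tau,\eta,x},Z^{\tau,\eta,x},U^{\tau,\eta,x})\in\KK([0,T])$, which together with the forward solution closes existence and uniqueness.

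For continuity of $(\tau,\eta,x)\mapsto(X^{\tau,\eta,x},Y^{\tau,\eta,x},Z^{\tau,\eta,x},U^{\tau,\eta,x})$, the forward part is already locally Lipschitz in $(\eta,x)$ by \eqref{EQN:LipStima}, and continuity in $\tau$ follows by comparing the processes started at nearby times and absorbing the extra increments through \eqref{stima1}; as the authors note, the extension from $\tau=0$ to general $\tau$ is routine, even if changing the initial time alters the time domain of the system. For the backward part I would invoke the standard stability estimate for BSDEs with jumps: writing $(\Delta Y,\Delta Z,\Delta U)$ for the difference of two backward solutions, an application of It\^{o}'s formula to $|\Delta Y|^2$ followed by the \BDG and Young inequalities bounds $\|(\Delta Y,\Delta Z,\Delta U)\|_{\KK}^2$ by the $L^2$ norm of the terminal difference plus that of the generator difference evaluated along one solution; using (B1)--(B2) and \eqref{EQN:LipStima} these are controlled by $|(\EXU)-(\EXD)|_2$ times a polynomially growing factor. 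The main obstacle I anticipate is precisely this last step: because $\psi$ and $\phi$ grow polynomially in the space variable, the stability estimate cannot be applied with a uniform Lipschitz constant, and one must instead split off the polynomial factor by H\"older's inequality and absorb it using arbitrarily high moments of the forward process — which is exactly why $\Sp$ membership for every $p$, rather than only $p=2$, is needed.
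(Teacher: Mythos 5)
Your proposal is correct and follows essentially the same route as the paper: the system is decoupled, so the forward part is handled by Theorem \ref{eu_f}, the backward part by the standard theory of BSDEs with jumps (the paper cites \cite[Cor. 2.3]{Bar} and \cite[Thm. 4.1.3]{delong}, exactly the results whose hypotheses you verify), and continuity follows from the forward Lipschitz estimate \eqref{EQN:LipStima} together with BSDE stability (the paper cites \cite[Prop. 1.1]{Bar}). Your additional checks — that (B3) and \eqref{EQN:ConNu1} give $\delta\in L^2(\nu)$ so the generator is Lipschitz in $U$, and that high moments of the forward process absorb the polynomial growth of $\phi$ and $\psi$ — are precisely the verifications the paper delegates to those references.
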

\begin{proof}
The existence and uniqueness of the solution to the forward component follows from theorem \ref{eu_f}, 
since Assumptions \ref{ass1} hold true by hypothesis, whereas the existence and uniqueness of the backward component under Assumptions \ref{assFB} follows \cite[Cor. 2.3]{Bar} or \cite[Thm. 4.1.3]{delong} . 

The continuity of the map $(\tau,\eta,x) \mapsto X^{\tau,\eta,x}$ is guaranteed by theorem \ref{eu_f}, whereas the continuity of $(\tau,\eta,x) \mapsto (Y^{\tau,\EXZ }, Z^{\tau,\EXZ }, U^{(\tau,\EXZ })$ follows from \cite[Prop. 1.1]{Bar}.
%

\end{proof}

\begin{thm}\label{thm}
Let us consider the coupled forward-backward system \eqref{FBsyst_tx} which satisfies Assumptions \ref{ass1} 
and \ref{assFB}. Let us define the function $u: \, [0,T]\times \DDD \, \to \, \real$, 
\[
u(t,\eta,x) 
:= Y^{t,\EXZ}_t \, ,
\]
with $t \,\in\, [0,T]$ and $(\EXZ) \,\in\, \DDD,$. 

Then, there exist $C >0$ and 
$m \geq 0$, such that, for any $t \in [0,T]$ and any $(\eta_1,x_1)$, $(\eta_2,x_2) \in \DDD$, the function $u$ satisfies

\begin{equation}\label{EQN:KFTTh2}
\begin{split}
&|u(t,\eta_1,x_1) - u(t,\eta_2,x_2)|\leq C|(\eta_1,x_1)-(\eta_2,x_2)|_2(1+ |(\eta_1,x_1)|_2 + |(\eta_2,x_2)|_2 )^m\, ,\\
&|u(t,0,0)| \leq C \, .
\end{split}
\end{equation}

Moreover, for every $t \, \in \, [0,T]$ and $(\EXZ) \, \in \, \DDD$ we have $\mathbb{P}-$a.s. and for a.e. $t \in [\tau,T]$
\begin{equation}\label{EQN:RepYXZ}
\begin{split}
Y^{\tau,\EXZ}(t) &= u\left (t, X^{\tau,\EXZ}_{t},\XTE(t)\right ) \, , \\
Z^{\tau,\EXZ}(t) &= \partial_x^\sigma u\left (t, \XTE_{t},\XTE(t)\right ) \, , \\ 
U^{\tau,\EXZ}(t,z) &= u(t,\XTE_{t},\XTE(t) + \gamma(t,\XTE_{t},\XTE(t),z) ) \\
& - u(t, \XTE_{t},\XTE(t))\, ,
\end{split}
\end{equation}
where $\partial_x^\sigma$ is the \textit{generalized directional gradient} in the sense of equation \eqref{EQN:GenGra}.
\end{thm}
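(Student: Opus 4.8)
The plan is to first establish the regularity estimate \eqref{EQN:KFTTh2}, and then to derive the three identities in \eqref{EQN:RepYXZ} by combining a flow (cocycle) argument with the joint quadratic variation result of Theorem \ref{thm3.1}.

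\textbf{Regularity of $u$.} I would begin by noting that $u(t,\eta,x) := Y^{t,\eta,x}_t = Y^{t,\eta,x}(t)$ is well defined and deterministic: being the initial value of the backward equation in \eqref{FBsyst_tx} on $[t,T]$, it is $\mathcal{F}_t$-measurable, while at the same time it is a functional of the deterministic datum $(\eta,x)$ and of the noise strictly after $t$, which is independent of $\mathcal{F}_t$; hence $u(t,\eta,x)$ is a.s.\ constant. The bound $|u(t,0,0)| \leq C$ then follows from the standard a priori estimate for the backward component, using the growth bounds on $\psi$ and $\phi$ recorded in the Remark after Assumption \ref{assFB} and the forward estimate \eqref{stima1} at $(\eta,x)=(0,0)$. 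For the local Lipschitz estimate, I would apply the BSDE stability estimate to two initial data $(\eta_1,x_1)$ and $(\eta_2,x_2)$: the difference $Y^{t,\eta_1,x_1}_t - Y^{t,\eta_2,x_2}_t$ is controlled by the $L^2$-distance of the terminal values together with the generator differences, which by (B1)--(B2) are bounded by a polynomial-weighted multiple of $\norma X^{\eta_1,x_1} - X^{\eta_2,x_2}\norma_{\Sp}$; the Lipschitz continuity \eqref{EQN:LipStima} of the forward flow then yields \eqref{EQN:KFTTh2}.

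\textbf{Identification $Y^{\tau,\eta,x}(t)=u(t,X_t^{\tau,\eta,x},X^{\tau,\eta,x}(t))$.} Here I would invoke the cocycle property of the forward equation: by pathwise uniqueness (Theorem \ref{eu_f}) one has $X^{\tau,\eta,x}_s = X^{t,\,X_t^{\tau,\eta,x},\,X^{\tau,\eta,x}(t)}_s$ for all $s\in[t,T]$, $\mathbb{P}$-a.s. Restricting the backward equation in \eqref{FBsyst_tx} to $[t,T]$ gives a BSDE driven by the same data, and uniqueness of its solution (Proposition \ref{thm4.2}) forces $Y^{\tau,\eta,x}(s)=Y^{t,\,X_t^{\tau,\eta,x},\,X^{\tau,\eta,x}(t)}(s)$ for $s\in[t,T]$; evaluating at $s=t$ produces the first line of \eqref{EQN:RepYXZ}.

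\textbf{Identification of $U$ and $Z$.} With the first identity in hand, the semimartingale $u(\cdot,X_\cdot,X(\cdot))=Y^{\tau,\eta,x}$ inherits the regularity \eqref{EQN:KFTTh} from the regularity step, so Theorem \ref{thm3.1} applies. On the one hand, reading off the jump part of the backward dynamics \eqref{FBsyst_tx} shows that the joint quadratic variation of $Y^{\tau,\eta,x}$ with $J(t)=\int_0^t\int_{\RR_0} z\,\comp(ds,dz)$ equals $\int_\tau^{T'}\int_{\RR_0} z\,U^{\tau,\eta,x}(s,z)\,N(ds,dz)$; on the other hand \eqref{EQN:JV2} computes the same covariation as $\int_\tau^{T'}\int_{\RR_0} z\,[u(s,X_s,X(s)+\gamma(s,X_s,X(s),z))-u(s,X_s,X(s))]\,N(ds,dz)$. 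Matching the two integrands $N$-a.e.\ and cancelling the factor $z\neq 0$ yields the formula for $U^{\tau,\eta,x}$. The component $Z^{\tau,\eta,x}$ is obtained in the same spirit from the Brownian covariation: the diffusion part of \eqref{FBsyst_tx} gives $\left\langle Y^{\tau,\eta,x},W\right\rangle_{[\tau,T']}=\int_\tau^{T'}Z^{\tau,\eta,x}(s)\,ds$, while the Remark following Theorem \ref{thm3.1} identifies the same bracket with $\int_\tau^{T'}\partial_x^\sigma u(s,X_s,X(s))\,ds$, whence $Z^{\tau,\eta,x}(t)=\partial_x^\sigma u(t,X_t,X(t))$ for a.e.\ $t$.

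\textbf{Main obstacle.} I expect the delicate point to be the identification of $Z$ and $U$ through the quadratic variation, rather than the regularity or the flow argument, which are essentially standard. The subtlety is twofold: one must verify that $u$ genuinely satisfies the hypotheses \eqref{EQN:KFTTh} of Theorem \ref{thm3.1} (supplied by the regularity step), and one must cleanly separate the Brownian and jump contributions to the two brackets so that the martingale-representation components of the BSDE are matched term by term with the explicit expressions of Theorem \ref{thm3.1}. Because $u$ is only locally Lipschitz and not differentiable, the diffusive identification necessarily proceeds through the generalized directional gradient $\partial_x^\sigma u$ of \cite{FT1}, which is precisely where the machinery of Section \ref{jointqv} is brought to bear.
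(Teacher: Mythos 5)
Your proposal is correct and follows essentially the same route as the paper: the identification of $U$ by comparing the jump bracket $\langle Y, J\rangle$ computed from the BSDE with the one supplied by Theorem \ref{thm3.1} is exactly the paper's argument, and the regularity estimate plays the same role of making Theorem \ref{thm3.1} applicable. The pieces you prove by hand --- the flow/uniqueness identification of $Y$, the Brownian-bracket identification of $Z$ via the generalized directional gradient, and the BSDE-stability derivation of \eqref{EQN:KFTTh2} --- are precisely what the paper imports by citation from \cite[Cor. 4.3]{FMT} and from the continuity statement of Proposition \ref{thm4.2}, so you have simply unpacked those citations rather than taken a different path.
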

\begin{rem}
Let us recall that, if $u$ is sufficiently regular, then 
\[
Z^{(\tau,\EXZ)}(t) = \partial_x u(t, \XTE_{t},\XTE(t)) \sigma(t, \XTE_{t},\XTE(t))\, .
\]
\end{rem}

\begin{proof}
The fact that $u(t,\eta,x) := Y^{t,\EXZ}_t$ satisfies \eqref{EQN:KFTTh2} immediately follows from the continuity of the map
\[
(\tau,\eta,x) \mapsto (X^{\tau,\eta,x},Y^{\tau,\EXZ }, Z^{\tau,\EXZ }, U^{\tau,\EXZ })\, ,
\]
proved in proposition \ref{thm4.2} together with assumptions \ref{ass1}.

The representation of $Y$ and $Z$ follow from \cite[Cor. 4.3]{FMT}. 

As regards the process $U,$ using 
the standard notion of joint variation we have
\begin{equation}\label{EQN:1b}
\left\langle Y^{\tau,\EXZ} (\cdot),J(\cdot) \right\rangle_{[\tau,T]} = \int_\tau^T \int_{\RR_0} z \,U^{\tau,\EXZ}(s,z) N(ds,dz)\, .
\end{equation}

On the other hand, Theorem \ref{thm3.1} implies 

\begin{align}
\nonumber
& \left\langle u(\cdot, X^{\tau,\EXZ}_{\cdot}, \XTE(\cdot)),J(\cdot) \right\rangle_{[\tau,T]} \\ \nonumber
& = \int_\tau^T \int_{\RR_0} z \left [ u(s,\XTE_s, \XTE(s) + \gamma(s,\XTE_s,\XTE(s),z) )\right] N(ds,dz) \\ 
\label{EQN:2b}
& - \int_\tau^T \int_{\RR_0} z \left[ u(s, \XTE_s,\XTE(s))\right ] N(ds,dz)\, . 
\end{align}

Comparing now equation \eqref{EQN:1b} and equation \eqref{EQN:2b}, the representation for $U$ in equation \eqref{EQN:RepYXZ} follows.
\end{proof} 

\subsection{The non-linear Kolmogorov equation} \label{nonlin}
The present section is devoted to prove that the solution to the \fb system \eqref{FBsyst_tx} 
can be connected to the solution of a path-dependent partial integro-differential equation with values in the Hilbert space $\DDD$.

More precisely, let us consider the Markov process $(\XTE_t,\XTE(t))$ defined as the solution of equation 
\eqref{EQN:DelayGenerale}, and the corresponding infinitesimal generator $\LL_t$.

The path-dependent partial-integro differential equation we want to investigate has the following form

\begin{equation}\label{parabolic2}
\begin{cases}
\frac{\partial}{\partial t} u(t,\eta,x) + \LL_t u(t,\eta,x) = \psi\left (t,\eta,x, u(t,\eta,x), \partial_x^\sigma u(t,\eta,x), \mathcal{J} u(t, \eta,x)\right ) \, , \\ 
u(T,\eta,x) = \phi(\eta,x), 
\end{cases}
\end{equation}
for all $t\, \in \, [0,T],$ and $(\eta,x) \, \in \, \DDD,$ where $u : [0,T]\times \DDD \,\to, \real$ is an 
unknown function, $\psi$ and $\phi$ are two given functions such that $\psi: [0,T] \times \DDD \times \real \times \real \times \real \,\to \real$ and 
$\psi : \DDD \,\to \real$, $\partial_x^\sigma u$ is the \textit{generalized directional gradient} and $\mathcal{J}$ is a functional acting as
\[
\mathcal{J} u(t,\eta, x) = \int_{\RR_0} \left (u(t, \eta, x+ \gamma(t,\eta,x,z)) - u(t,\eta, x)\,\right ) \delta(z)\nu(dz)\, .
\]
%

In particular, we want to look for a {\it mild solution} of equation (\ref{parabolic2}), according to the 
following definition.
\begin{defn}\label{mild}
A \emph{mild solution} 
to equation \eqref{parabolic2} is a function $u:\, [0,T] \times \DDD \,\to \real$ such that there exist $C >0$ 
and $m \geq 0$, such that, for any $t \in [0,T]$ and any $(\eta_1,x_1)$, $(\eta_2,x_2) \in \DDD$, $u$ 
satisfies

\begin{equation}
\begin{split}
&|u(t,\eta_1,x_1) - u(t,\eta_2,x_2)|\leq C|(\eta_1,x_1)-(\eta_2,x_2)|_2(1+ |(\eta_1,x_1)|_2 + |(\eta_2,x_2)|_2 )^m \\
&|u(t,0,0)| \leq C 
\end{split}
\end{equation}

and the following identity hold true
\begin{equation}\label{def_mild}
u(t,\eta,x) = P_{t,T} \phi (\eta,x) + \int_t^T P_{t,s}[\psi(\cdot, u(s,\cdot), \partial_x^\sigma u(s,\cdot), \mathcal{J} u(s, \cdot )](\eta,x) \de s \;,
\end{equation}
for all $t\, \in \, [0,T],$ and $(\eta,x) \, \in \, \DDD$ and where $P_{t,s}$ is the Markov semigroup for equation \eqref{delay_eq} introduced in equation \eqref{semigroup}. 
\end{defn}

\begin{thm}\label{mild_ex}
Assume that Assumptions \ref{ass1} and Assumptions \ref{assFB} hold true. Then, the path-dependent 
partial integro-differential equation \eqref{parabolic2} admits a unique mild solution $u,$ in the 
sense of definition \ref{mild}. In particular, the mild solution $ u$ coincide with the function $u$ introduced in theorem \ref{thm}.
\end{thm}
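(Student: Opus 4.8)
The plan is to split the statement into an existence part and a uniqueness part, the latter simultaneously yielding the identification with the value function $u(t,\eta,x):=Y^{t,\eta,x}_t$ of Theorem \ref{thm}. The growth and local Lipschitz estimates \eqref{EQN:KFTTh2} demanded by Definition \ref{mild} are already supplied by Theorem \ref{thm}, so the only substantive points are the mild integral identity \eqref{def_mild} and the impossibility of a second solution. For existence I would show that the function $u$ of Theorem \ref{thm} satisfies \eqref{def_mild}. Writing the backward component of \eqref{FBsyst_tx} started at $(t,\eta,x)$ in integral form on $[t,T]$ and taking expectations, the Brownian and compensated-Poisson integrals vanish in mean, so that
\[
u(t,\eta,x)=\media\big[\phi(X_T^{t,\eta,x},X^{t,\eta,x}(T))\big]+\media\Big[\int_t^T \psi\big(s,X_s^{t,\eta,x},X^{t,\eta,x}(s),Y(s),Z(s),\tilde U(s)\big)\,\de s\Big].
\]
Inserting the representations \eqref{EQN:RepYXZ} for $Y$, $Z$ and $U$ — so that in particular $\tilde U(s)=\int_{\RR_0}U(s,z)\delta(z)\nu(\de z)=\mathcal{J}u(s,X_s,X(s))$ — turns the integrand into $\psi$ evaluated along $u$, $\partial_x^\sigma u$ and $\mathcal{J}u$. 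The Markov property of Proposition \ref{PRO:Markov} identifies each expectation with the semigroup \eqref{semigroup}, i.e. $\media[\phi(X_T^{t,\eta,x})]=P_{t,T}\phi(\eta,x)$ and $\media[\Xi(s,X_s^{t,\eta,x},X^{t,\eta,x}(s))]=P_{t,s}[\Xi(s,\cdot)](\eta,x)$, and Fubini's theorem (justified by the polynomial bounds of Assumptions \ref{assFB} and the moment estimate \eqref{stima1}) yields exactly \eqref{def_mild}.

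\textbf{Uniqueness and coincidence.} Let $v$ be any mild solution; fix $(\tau,\eta,x)$, abbreviate $X$ for $X^{\tau,\eta,x}$, and set
\[
M(s):=v(s,X_s,X(s))+\int_\tau^s \psi\big(r,X_r,X(r),v(r,X_r,X(r)),\partial_x^\sigma v(r,X_r,X(r)),\mathcal{J}v(r,X_r,X(r))\big)\,\de r .
\]
Applying \eqref{def_mild} at the running point $(s,X_s,X(s))$ and using $P_{s,r}\varphi(X_s,X(s))=\media[\varphi(X_r,X(r))\mid\mathcal{F}_s]$, one checks that $M$ is an $(\mathcal{F}_s)$-martingale. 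The martingale representation theorem for the Brownian--Poisson filtration then produces predictable $Z^v,U^v$ with $M(s)=M(\tau)+\int_\tau^s Z^v\,\de W+\int_\tau^s\int_{\RR_0}U^v\,\comp(\de r,\de z)$, whence $(v(\cdot,X_\cdot,X(\cdot)),Z^v,U^v)$ solves the backward equation in \eqref{FBsyst_tx} with terminal datum $\phi$. Computing the joint quadratic variation of $v(\cdot,X_\cdot,X(\cdot))$ with $W$ (via the remark following Theorem \ref{thm3.1}) and with $J$ (via Theorem \ref{thm3.1} itself) identifies $Z^v=\partial_x^\sigma v(\cdot,X_\cdot,X(\cdot))$ and $U^v(\cdot,z)=v(\cdot,X_\cdot,X(\cdot)+\gamma(\cdot,z))-v(\cdot,X_\cdot,X(\cdot))$. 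Uniqueness of the solution to \eqref{FBsyst_tx} (Proposition \ref{thm4.2}) forces $v(s,X_s,X(s))=Y^{\tau,\eta,x}(s)$ for a.e. $s$, and evaluating at $s=\tau$ gives $v(\tau,\eta,x)=Y^{\tau,\eta,x}_\tau=u(\tau,\eta,x)$. As $(\tau,\eta,x)$ and $v$ were arbitrary, the mild solution is unique and coincides with the function of Theorem \ref{thm}.

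\textbf{Main obstacle.} The delicate step is verifying that $M$ is a genuine martingale and, above all, that the representing processes $Z^v,U^v$ coincide with the directional gradient and the jump increments of $v$. Since $v$ is only locally Lipschitz and $\partial_x^\sigma v$ is defined merely as an element of the generalized directional gradient — an equivalence class through \eqref{EQN:GenGra} — one cannot differentiate \eqref{def_mild} pointwise, and the identification must be extracted intrinsically from the joint quadratic variation computations of Theorem \ref{thm3.1}, whose very purpose is to bypass an It\^o formula for the non-semimartingale segment process $X_t$. Guaranteeing the measurability and integrability of $\partial_x^\sigma v(\cdot,X_\cdot,X(\cdot))$ and $\mathcal{J}v(\cdot,X_\cdot,X(\cdot))$ along the trajectory, so that $M$ is well defined and the martingale representation applies, is precisely where the estimates \eqref{EQN:KFTTh2} and the Malliavin-differentiability result of Theorem \ref{THM:MallDiff} enter.
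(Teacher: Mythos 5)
Your proposal is correct and follows essentially the same route as the paper: existence by taking expectations in the backward equation and invoking the representations \eqref{EQN:RepYXZ} together with the Markov semigroup, and uniqueness by turning the mild identity into a martingale along the forward trajectory, applying the martingale representation theorem, identifying the integrands via the joint quadratic variations of Theorem \ref{thm3.1} (and the remark following it for the Brownian part), and concluding by uniqueness of the BSDE solution. Your explicit martingale $M(s)$ is just a cleaner packaging of the paper's conditional-expectation computation with $\xi$, so the two arguments coincide step for step.
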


\begin{proof} 
In what follows, as above, we will denote for short
\[
\tilde{U}^{\tau,\eta,x}(s) :=\int_{\RR_0}U^{\tau,\eta,x}(s,z)\delta(z) \nu(dz)\, ,
\]
Let us consider the backward stochastic differential equation in equation (\ref{FBsyst_tx}), namely,  

\[
\begin{split}
Y^{t,\EXZ} (t) &= \phi(X^{t,\EXZ}_T, X^{t,\eta,x}(T))  + \\
&+ \int_t^T \psi\left(X^{t,\eta,x}_s,X^{t,\eta,x}(s), Y^{t,\EXZ}(s), Z^{t,\EXZ}(s),\tilde{U}^{t,\eta,x}(s) \right) \de s \\ 
& + \int_t^T Z^{t,\EXZ}(s) \de W(s) + \int_t^T \int_{\real_0} U^{t,\EXZ}(s,z) \comp(\de s, \de z) \;.
\end{split}
\]

Taking the expectation and exploiting equation \eqref{EQN:RepYXZ}, then $Y$ satisfies equation \eqref{def_mild}.

In order to show the uniqueness let $u(t,\eta,x), \; 0 \leq \tau \leq t \leq T,$ be a mild solution 
of equation \eqref{parabolic2}, so that 

\begin{align*}
u(t,&\eta,x) = \media\left[  \phi(X^{t,\EXZ}_T, X^{t,\eta,x}(T))\right] \\
+ & \media \left[ \int_t^T \psi\left(X^{t,\eta,x}_s,X^{t,\eta,x}(s), Y^{t,\EXZ}(s), Z^{t,\EXZ}(s),\tilde{U}^{t,\eta,x}(s) \right)\de s \right] \;.
\end{align*}

By recalling that $\left (X^{\tau,\eta,x}_t,X^{\tau,\eta,x}(t)\right )_{t \in [0,T]}$ is a $\DDD-$Markov process, and denoting by $\mathbb{E}^t$ the 
conditional expectation w.r.t. the filtration $\mathcal{F}_t$, we can write

\[
\begin{split}
u(t & ,X^{t,\EXZ}_t,X^{t,\EXZ}(t))  =  \media^{t} \left[ \phi(X^{t,\EXZ}_T, X^{t,\eta,x}(T)) \right ] \\
+ & \media^{t} \left[ \int_\tau^T \psi\left(X^{t,\eta,x}_s,X^{t,\eta,x}(s), Y^{t,\EXZ}(s), Z^{t,\EXZ}(s),\tilde{U}^{t,\eta,x}(s) \right)ds \right] \\ 
- & \media^{t}\left[ \int_\tau^t \psi\left(\XTE_s,\XTE(s), Y^{\tau,\EXZ}(s), 
Z^{\tau,\EXZ}(s),\tilde{U}^{\tau,\eta,x}(s) \right) \de s \right] \; .
\end{split}
\]

We set, for short,
\begin{align*}
\xi & := \phi(X^{t,\EXZ}_T, X^{t,\eta,x}(T)) \\ 
+ & \int_\tau^T  \psi\left(X^{t,\eta,x}_s,X^{t,\eta,x}(s), Y^{t,\EXZ}(s), Z^{t,\EXZ}(s),\tilde{U}^{t,\eta,x}(s) \right) \de s \, .
\end{align*}

Thanks to the martingales representation theorem, see, e.g., \cite[Thm. 5.3.5]{App}, there exist two 
predictable processes $\bar{Z} 
\,\in\, L^2(\Omega\times[0,T])$ and $ \, \bar{U} \,\in\, L^2(\Omega\times[0,T]\times\real_0)$ such that 

\begin{align*}
u(t,& \XTE_t,\XTE(t)) = u(\tau,\eta,x) \\ 
 & + \int_\tau^t \bar{Z}^{\tau,\EXZ}(s) \de W(s) + \int_\tau^t \int_{\real_0} \bar{U}^{\tau,\EXZ}(s, z) 
\comp(\de s, \de z) \\ 
 & - \int_\tau^t \psi\left(\XTE_s,\XTE(s), Y^{\tau,\EXZ}(s), Z^{\tau,\EXZ}(s),\tilde{U}^{\tau,\eta,x}(s) \right) \de s \;.
\end{align*}

Applying theorem \ref{thm3.1}, we have 

\begin{align*}
&u(t,\XTE_t,\XTE(t)) = \phi (\XTE_T,\XTE(T))+\\
& - \int_t^{T} \partial_x^\sigma u(s,\XTE_s,\XTE(s)) \de W(s) \\
& - \int_t^T \int_{\real_0} \left[u(s,\XTE_s,\XTE(s) +\gamma(s,\XTE_s,\XTE(s),z)) \right. \\ 
& - \left. u(s, \XTE_s,\XTE(s)) \right] \tilde{ N}(\de s, \de z) \\ 
& +  \int_t^T \psi\left(\XTE_s,\XTE(s), Y^{\tau,\EXZ}(s), Z^{\tau,\EXZ}(s),\tilde{U}^{\tau,\eta,x}(s) \right) \de s \;.
\end{align*}

By comparing last equation with the backward component of equation \eqref{FBsyst_tx}, we note that 
$(Y^{\tau,\EXZ}(t),Z^{\tau,\EXZ}(t),U^{\tau,\EXZ}(t,z))$ and the following three functions 
\[
\begin{split}
&\left (u(t,\XTE_t,\XTE(t)), \partial_x^\sigma u(t,\XTE_t,\XTE(t))\right. , \\
&u\left .(t,\XTE_s,\XTE(s)+\gamma(s,\XTE_s,\XTE(s),z)) - u(t, \XTE_s,\XTE(s))\right )\, ,
\end{split}
\]
solve the same equation. Therefore, due to the uniqueness of the solution, we have that
\[
Y^{\tau,\EXZ}(t) = u(t, \XTE_t,\XTE(t)) \;.
\]
Setting $\tau = t,$ we obtain $Y^{\tau,\EXZ}(t) = u(t,\eta,x)$ and the proof is complete.
\end{proof}

\section{Application to optimal control}\label{SEC:OC} 

We are to apply previously derived results to a general class on non-linear control problem. The present section closely follows in \cite[Section. 7]{FT1}, in particular we will consider weak control problems, we refer to \cite{Fle} for a general treatment of the present notion of control, or \cite{CDPN1,CDPN2,KLSu,KLSu2}.

Let us therefore consider the following $\RR-$valued controlled delay equation, 
\begin{equation}\label{EQN:ControlDelay}
\begin{cases}
d X(t) &= \left (\mu(t,X_t,X(t)) + F(t,X(t),\alpha(t))\right )dt +\\
&\quad+ \sigma(t,X(t))dW(t) + \int_{\RR_0} \gamma(t,X(t),z) \tilde{N}(dt,dz)\, ,\\
\left (X_{t_0},X({t_0})\right ) &= \left (x,\eta\right )\,, 
\end{cases}
\end{equation}
where we have denoted by $\alpha : \Omega \, \times \, [0,T] \to \mathcal{A}$ a $\left (\mathcal{F}_t\right )_{t \geq 0}-$predictable process representing the control, being $\mathcal{A} \subset \RR^N$ a convex set, $N \in \mathbb{N}$. 

In what follows we assume $\mu$, $\sigma$ and $\gamma$ to satisfy assumptions \ref{ass1}, we also require that it exists a constant $C_\sigma >0$ such that, for any $t \in [0,T]$ and $x \in \RR$,
\[
|\sigma^{-1}(t,x)|\leq C_\sigma\, .
\]

We remark that a possibly choice for the coefficient $\mu$ in equation \eqref{EQN:ControlDelay} is of the form
\[
\mu(t,X_t,X(t)) = \int_{-r}^0 X(t+\theta)\varpi(d\theta) \,,
\]
for $\varpi$ a Borel measure of bounded variation on the interval $[-r,0]$.

Following \cite[Section 7]{FT1}, we will say therefore that an \textit{admissible control system} (acs) is given by $\mathbb{U} = \left (\Omega,\mathcal{F},\mathbb{P},W,\nu,\alpha,X\right )$, where $\left (\Omega,\mathcal{F},\mathbb{P}\right )$ is a complete probability space, with an associated filtration satisfying usual conditions, $W$ is a Wiener process whereas $\nu$ is a L\'{e}vy measure also satisfying usual assumptions introduced in previous sections, $\alpha$ is the control defined above and $X$ is the unique solution to equation \eqref{EQN:ControlDelay}. Then we wish to minimize, over all control $\alpha \in \mathcal{A}$, the following functional
\begin{equation}\label{EQN:FinMin}
J\left (t_0,\left (x,\eta\right ),\mathbb{U}\right ) = \int_0^T h(s,X(s),\alpha) ds + g(X(T))\, .
\end{equation}

We thus assume the following to hold.

\begin{ass}
\begin{description}
\item[(i)] let $F:[0,T] \times \RR \times \mathcal{A} \to \RR$ be measurable and such that there exist $C_F>0$ and $m \geq 0$ such that, for any $t \in [0,T]$, $x,\,x_1,\,x_2 \in \RR$ and $\alpha \in \mathcal{A}$,
\[
\begin{split}
|F(t,x,\alpha)| &\leq C\, ,\\
|F(t,x_1,\alpha) - F(t,x_2,\alpha)| &\leq C_F(1 + |x_1| + |x_2|)^m |x_1-x_2|\,.
\end{split}
\]

\item[(ii)] let $h:[0,T] \times \RR \times \mathcal{A} \to \RR \cup \{+\infty\}$ be measurable and such that there exist $C_h >0$ and $m \geq 0$ such that, for any $t \in [0,T]$, $x,\,x_1,\,x_2 \in \RR$ and $\alpha \in \mathcal{A}$,
\[
\begin{split}
h(t,0,\alpha) &\geq -C_h\, ,\quad \inf_{\alpha \in \mathcal{A}} h(t,0,\alpha) \leq C_h\, ,\\
|h(t,x_1,\alpha) - h(t,x_2,\alpha)| &\leq C_h(1 + |x_1| + |x_2|)^m |x_1-x_2| + h(t,x_2,\alpha)\,.
\end{split}
\]

\item[(iii)] let $g:\RR \to \RR$ be measurable and such that there exist $C_g>0$ and $m>0$ such that, for any $x,\,x_1,\,x_2 \in \RR$  it holds
\[
|g(x_1) -g(x_2) | \leq C_g (1 + |x_1| + |x_2|)^m |x_1-x_2|\, .
\]
\end{description}
\end{ass}

The particular form for equation \eqref{EQN:ControlDelay} leads to consider an associated \textit{Hamilton-Jacobi-Bellman} (HJB) equation which is a semilinear partial integro-differential equation of the form of equation \eqref{parabolic2} studied in previous sections. Noticed that the particular form for equation \eqref{EQN:ControlDelay}, in particular the presence of the control in the drift, is imposed by the techniques we will use.

Then the controlled equation \eqref{EQN:ControlDelay}, together with the functional $J$ introduced in equation \eqref{EQN:FinMin}, lead to define in a classical way the \textit{Hamiltonian} associated to the above problem as
\[
\begin{split}
\psi(t,x,z) &= -\inf_{\alpha\in \mathcal{A}} \left \{h(t,x,\alpha) + z \sigma^{-1}(s,x) F(s,x,\alpha) \right \}\, ,\\
\Gamma(t,x,z) &= \left \{ \alpha \in \mathcal{A} \, : \, \psi(t,x,z) + h(t,x,\alpha) + z \sigma^{-1}(s,x) F(s,x,\alpha)=0 \right \}\, .
\end{split}
\]

Let us stress that under above assumptions we have that $\psi$ satisfies assumptions \ref{assFB}. Eventually we can formulate the HJB equation associated to the above stated non-linear control problem to be
\begin{equation}\label{EQN:MildHJM}
\begin{cases}
\frac{\partial}{\partial t} u(t,\eta,x) + \mathcal{L}_t u(t,\eta,x)&=\psi(t,x,\partial_x^{\sigma^{-1} F} u(t,\eta,x))\, ,\\
u(T,\eta,x) = g(x)\, ,
\end{cases}
\end{equation}
where the notation is as above introduced. From Theorem \ref{mild_ex}, it follows that equation \eqref{EQN:MildHJM} admits a unique solution in the sense of generalized direction gradient.

Eventually, from \cite[Theorem 7.2]{FT} or \cite[Theorem 7.2]{FT1} which follow in a straightforward manner in the present case, we have that an acs system is optimal if and only if
\[
\alpha(t) \in \Gamma\left (t,X(t),\zeta(t,X_t,X(t))\right )\,,
\]
being $\zeta : [0,T] \times \DDD \to \RR$ an element of the directional generalized gradient.

\begin{thm}\label{THM:Synth}
Let $u$ be a mild solution to the HJB equation \eqref{EQN:MildHJM}, and choose $\zeta$ to be an element of the generalized directional gradient $\partial_x^{\sigma^{-1}F} u$. Then, for all acs, we have that $J(t_0,x,\eta,\mathbb{U}) \geq u(t_0,x,\eta)$, and the equality holds if and only if
\[
\alpha(t) \in \Gamma\left (t,X(t),\zeta(t,X_t,X(t))\right )\, ,\quad \mathbb{P}-\,a.s. \, \mbox{ for a.a. } \, t \in [t_0,T]\, .
\]

Moreover, if there exists a measurable function $\varsigma \,: [0,T] \times \RR \to \mathcal{A}$ with
\[
\varsigma(t,x,z) \in \Gamma(t,x,z)\, ,
\]
then there also exists at least one acs such that
\[
\bar \alpha(t) = \varsigma(t,X^\alpha(t),\zeta(t,X^\alpha_t,X^\alpha(t)))\, ,\quad \mathbb{P}-\,a.s. \, \mbox{ for a.a. } \, t \in [t_0,T]\, ,
\]
where $\left (X^\alpha_t,X^\alpha(t)\right )$ is the solution to equation 
\[
\begin{cases}
d X^\alpha(t) &= \mu(t,X^\alpha_t,X^\alpha(t))dt + \\
&+ F(t,X^\alpha(t),\varsigma(t,X^\alpha(t),\zeta(t,X^\alpha(t))))dt +\\
&+ \sigma(t,X^\alpha(t))dW(t) + \int_{\RR_0} \gamma(t,X^\alpha(t),z) \tilde{N}(dt,dz)\, ,\\
\left (X^\alpha_{t_0},X^\alpha({t_0})\right ) &= \left (x,\eta\right )\,, 
\end{cases}
\]
\end{thm}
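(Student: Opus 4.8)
The plan is to reduce the verification statement to a \emph{fundamental relation} expressing $u(t_0,\eta,x)$ as the cost of an associated backward equation, and then to read optimality off the pointwise definition of the Hamiltonian $\psi$ and of $\Gamma$. First I would fix an arbitrary admissible control system $\mathbb{U}=(\Omega,\mathcal{F},\mathbb{P},W,\nu,\alpha,X)$, so that $X$ solves the controlled equation \eqref{EQN:ControlDelay} under $\mathbb{P}$. Since by assumption $|\sigma^{-1}(t,x)|\le C_\sigma$ and $|F(t,x,\alpha)|\le C$, the integrand $\theta(s):=\sigma^{-1}(s,X(s))F(s,X(s),\alpha(s))$ is uniformly bounded, so Novikov's condition holds and $\rho_t:=\mathcal{E}\big(-\int_0^{\cdot}\theta\,dW\big)_t$ is a genuine $\mathbb{P}$-martingale defining an equivalent measure $\tilde{\mathbb{P}}$. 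Under $\tilde{\mathbb{P}}$ the process $\tilde W(t):=W(t)+\int_0^t\theta(s)\,ds$ is a Brownian motion, while $\comp$ remains a compensated Poisson measure with the same compensator (the density is driven by $W$ alone). Substituting $\sigma\,dW=\sigma\,d\tilde W-F\,dt$ into \eqref{EQN:ControlDelay} shows that, under $\tilde{\mathbb{P}}$, the very same path $X$ solves the \emph{uncontrolled} equation \eqref{EQN:DelayGenerale} driven by $\tilde W$ and $\comp$.

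Second, on $(\Omega,\mathcal{F},\tilde{\mathbb{P}},\tilde W,\comp)$ I would invoke Theorems \ref{mild_ex} and \ref{thm}: because the Hamiltonian $\psi(t,x,z)$ satisfies Assumptions \ref{assFB}, the mild solution $u$ of \eqref{EQN:MildHJM} coincides with the function of Theorem \ref{thm}, and by the representation \eqref{EQN:RepYXZ} the process $u(t,X_t,X(t))$ is the state component of the backward equation with generator $\psi(\cdot,X(\cdot),\zeta)$ and terminal datum $g(X(T))$, its Brownian control component $\zeta(t)$ being the generalized directional gradient $\partial_x^{\sigma^{-1}F}u(t,X_t,X(t))$ appearing in \eqref{EQN:MildHJM} and supplied by Theorem \ref{thm3.1}. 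Writing this equation in integral form, converting the $d\tilde W$-integral back through $d\tilde W=dW+\theta\,dt$, and taking $\mathbb{P}$-expectation (the $dW$- and $\comp$-martingale parts vanish, by the $\KK([0,T])$ and $\Sp$ bounds of Theorems \ref{eu_f} and \ref{thm4.2} together with the boundedness of the density) yields the fundamental relation
\[
u(t_0,\eta,x)=\media\left[ g(X(T))-\int_{t_0}^{T}\Big(\psi(s,X(s),\zeta(s))+\zeta(s)\,\sigma^{-1}F(s,X(s),\alpha(s))\Big)\,ds\right].
\]

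Third, subtracting this from $J(t_0,x,\eta,\mathbb{U})=\media\big[g(X(T))+\int_{t_0}^{T}h(s,X(s),\alpha(s))\,ds\big]$ gives
\[
J(t_0,x,\eta,\mathbb{U})-u(t_0,\eta,x)=\media\left[\int_{t_0}^{T}\Big(h(s,X(s),\alpha(s))+\zeta(s)\,\sigma^{-1}F(s,X(s),\alpha(s))+\psi(s,X(s),\zeta(s))\Big)\,ds\right].
\]
By the definition $\psi(t,x,z)=-\inf_{\beta\in\mathcal{A}}\{h(t,x,\beta)+z\,\sigma^{-1}(t,x)F(t,x,\beta)\}$, the integrand equals $\big[h+\zeta\,\sigma^{-1}F\big](s,X(s),\alpha(s))-\inf_{\beta\in\mathcal{A}}\big[h+\zeta\,\sigma^{-1}F\big](s,X(s),\beta)\ge 0$, which proves $J(t_0,x,\eta,\mathbb{U})\ge u(t_0,\eta,x)$. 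Since a nonnegative integrand has vanishing expectation if and only if it vanishes $ds\otimes d\mathbb{P}$-a.e., equality holds exactly when $\alpha(s)$ attains the infimum for a.a. $s$, i.e. when $\psi(s,X(s),\zeta(s))+h(s,X(s),\alpha(s))+\zeta(s)\sigma^{-1}F(s,X(s),\alpha(s))=0$, that is $\alpha(s)\in\Gamma(s,X(s),\zeta(s,X_s,X(s)))$; this establishes the first assertion.

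Fourth, for the synthesis I would start from a reference space carrying a Brownian motion and $\comp$ on which $X$ solves the uncontrolled equation \eqref{EQN:DelayGenerale}, and set the closed-loop control $\bar\alpha(t):=\varsigma(t,X(t),\zeta(t,X_t,X(t)))$, which is predictable because $\varsigma$ is measurable and $\zeta$ is a measurable functional of the path. An analogous Girsanov change of measure, using the again-bounded drift $\sigma^{-1}F(\cdot,X,\bar\alpha)$, produces a probability under which $X$ solves \eqref{EQN:ControlDelay} with control $\bar\alpha$, hence a genuine acs; by construction $\bar\alpha(s)\in\Gamma(s,X(s),\zeta(s))$ for a.a. $s$, so the equality case just proved gives $J=u(t_0,\eta,x)$ and optimality follows. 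The hardest point is the rigorous identification of $\zeta=\partial_x^{\sigma^{-1}F}u$ with the Brownian martingale component of $u(\cdot,X_\cdot,X(\cdot))$ in the merely mild, non-differentiable setting --- precisely what Theorem \ref{thm3.1} and the representation \eqref{EQN:RepYXZ} provide --- together with ensuring $\bar\alpha$ is a legitimate predictable control via the measurable-selection hypothesis on $\varsigma$; the remaining points (the true-martingale property of $\rho$, the vanishing of the stochastic integrals, and the finiteness of $J$) are routine consequences of the boundedness of $\sigma^{-1}F$ and the polynomial-growth Assumptions (i)--(iii) combined with Theorem \ref{eu_f}.
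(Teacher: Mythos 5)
Your proposal is correct and is, in substance, the paper's own proof: the paper disposes of Theorem \ref{THM:Synth} by citing \cite[Th. 7.2]{FT1} (and \cite[Th. 4.7, Cor. 4.8]{KLSu}), and the argument you reconstruct --- Girsanov removal of the bounded drift $\sigma^{-1}F$, the fundamental relation obtained from the backward representation of Theorems \ref{thm} and \ref{mild_ex} with $\zeta$ as the Brownian control component, pointwise minimization in the Hamiltonian to get $J \geq u$ with equality exactly on $\Gamma$, and measurable selection plus a reverse Girsanov change for the synthesis --- is precisely the weak-formulation argument of those cited results. Your adaptation to the jump setting (the Brownian Doléans-Dade density leaves the Poisson compensator unchanged, and the jump martingale term vanishes in expectation) is exactly the ``straightforward'' extension the paper alludes to, so no further comparison is needed.
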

\begin{proof}
See \cite[Th. 7.2]{FT1} or also \cite[Th. 4.7, Cor. 4.8]{KLSu}.
\end{proof}

%

\newpage

\end{document}